\theoremstyle{plain}
\newtheorem{thrm}{Theorem}[section]
\newtheorem*{thrm*}{Theorem}
\newtheorem{lemma}[thrm]{Lemma}
\newtheorem{prop}[thrm]{Proposition}
\newtheorem{cor}[thrm]{Corollary}
\theoremstyle{definition}
\newtheorem{dfn}[thrm]{Definition}
\theoremstyle{remark}
\newtheorem{rmrk}[thrm]{Remark}
\theoremstyle{example}
\numberwithin{equation}{section}
\begin{document}

\newcommand{\ddelta}{\delta}

\newcommand{\tx}{\tilde x}
\newcommand{\R}{\mathbb R}
\newcommand{\N}{\mathbb N}
\newcommand{\C}{\mathbb C}
\newcommand{\lie}{\mathcal G}
\newcommand{\hN}{\mathcal N}
\newcommand{\D}{\mathcal D}
\newcommand{\A}{\mathcal A}
\newcommand{\B}{\mathcal B}
\newcommand{\sL}{\mathcal L}
\newcommand{\sLi}{\mathcal L_{\infty}}

\newcommand{\G}{\Gamma}
\newcommand{\x}{\xi}

\newcommand{\eps}{\epsilon}
\newcommand{\al}{\alpha}
\newcommand{\be}{\beta}
\newcommand{\p}{\partial}  
\newcommand{\lig}{\mathfrak}

\def\dist{\mathop{\varrho}\nolimits}

\newcommand{\BCH}{\operatorname{BCH}\nolimits}
\newcommand{\Lip}{\operatorname{Lip}\nolimits}
\newcommand{\Hol}{C}                             
\newcommand{\lip}{\operatorname{lip}\nolimits}
\newcommand{\capQ}{\operatorname{Cap}\nolimits_Q}
\newcommand{\pCap}{\operatorname{Cap}\nolimits_p}
\newcommand{\Om}{\Omega}
\newcommand{\om}{\omega}
\newcommand{\half}{\frac{1}{2}}
\newcommand{\e}{\varepsilon}
\newcommand{\vn}{\vec{n}}
\newcommand{\X}{\Xi}
\newcommand{\tLip}{\tilde  Lip}

\newcommand{\Span}{\operatorname{span}}

\newcommand{\ad}{\operatorname{ad}}
\newcommand{\Hm}{\mathbb H^m}
\newcommand{\Hn}{\mathbb H^n}
\newcommand{\Hone}{\mathbb H^1}
\newcommand{\Lie}{\mathfrak}
\newcommand{\Layer}{V}
\newcommand{\hgrad}{\nabla_{\!H}}
\newcommand{\im}{\textbf{i}}
\newcommand{\nz}{\nabla_0}
\newcommand{\s}{\sigma}
\newcommand{\se}{\sigma_\e}

\newcommand{\ued}{u^{\e,\ddelta}}
\newcommand{\ueds}{u^{\e,\ddelta,\sigma}}
\newcommand{\tnabla}{\tilde{\nabla}}

\newcommand{\bx}{\bar x}
\newcommand{\by}{\bar y}
\newcommand{\bt}{\bar t}
\newcommand{\bs}{\bar s}
\newcommand{\bz}{\bar z}
\newcommand{\btau}{\bar \tau}

\newcommand{\LC}{\mbox{\boldmath $\nabla$}}
\newcommand{\Ne}{\mbox{\boldmath $n $}}
\newcommand{\nuo}{\mbox{\boldmath $n^0$}}
\newcommand{\nuu}{\mbox{\boldmath $n^1$}}
\newcommand{\nue}{\mbox{\boldmath $n $}}
\newcommand{\nuek}{\mbox{\boldmath $n^{\e_k}$}}
\newcommand{\dse}{\nabla^{H\Su, \e}}
\newcommand{\dso}{\nabla^{H\Su, 0}}
\newcommand{\tX}{\tilde X}

\newcommand\red{\textcolor{red}}
\newcommand\green{\textcolor{green}}

\newcommand{\Xie}{X psilon_i}
\newcommand{\Xje}{X psilon_j}
\newcommand{\Su}{\mathcal S}
\newcommand{\F}{\mathcal F}

\def\pa{\partial}
\def\Id{{\rm Id}\,}
\def\loc{{\rm loc}}
\def\div{{\rm div}\,}
\def\reg{{\rm reg}\,}
\def\regn{{\rm regn}\,}
\def\Cap{{\rm Cap}}

\title [Parabolic $p$-Laplacian in the Heisenberg group]{ Lipschitz regularity for solutions of the parabolic $p$-Laplacian 
 in the Heisenberg group}

\author[L. Capogna]{L. Capogna}
\address{Luca Capogna\\Department of Mathematics and Statistics, Smith College, Northampton, MA 01060\\
}\email{lcapogna@smith.edu}

\author[G.  Citti]{G.  Citti}\address{Dipartimento di Matematica, Piazza Porta S. Donato 5, Universit\`a di Bologna, 
40126 Bologna, Italy}\email{giovanna.citti@unibo.it}
%
%
\author{Xiao Zhong}\address{Department of Mathematics and Statistics, University of Helsinki, 00014 University of Helsinki, Finland}
\email{xiao.x.zhong@helsinki.fi}

\keywords{sub elliptic $p$-Laplacian, parabolic gradient estimates, Heisenberg group}

\thanks{LC was partially supported by NSF award  DMS1955992}
\thanks{GC was partially funded by  
Horizon 2020 Project ref. 777822: GHAIA}
\thanks{XZ was supported by the Academy of Finland, project \#308759.}

\begin{abstract}
We prove local Lipschitz regularity for  weak solutions to a class of  degenerate parabolic PDEs modeled on the parabolic $p$-Laplacian $$\p_t u= \sum_{i=1}^{2n} X_i (|\nabla_0 u|^{p-2} X_i u),$$ in a cylinder $\Omega\times \R^+$, where $\Omega$ is domain in the Heisenberg group $\Hn$, and  $2\le p \le 4$. The result continues to hold  in the more general setting of contact sub-Riemannian manifolds.
 \end{abstract}
\maketitle

\maketitle


\section{Introduction}

In this paper we establish the  local Lipschitz regularity of weak solutions of a certain class of quasilinear, degenerate parabolic equations in the Heisenberg group $\Hn$, or more in general in contact subRiemannian manifolds. In particular we extend to the non-stationary setting the early work \cite{Manfredi-Mingione, Mingione-Zatorska-Zhong}, by introducing  a new, more elegant approach to the regularity problem.

In a cylinder $Q=\Omega\times (0,T)$, where $\Omega\subset\Hn$ is an open set and $T>0$, we consider the equation
\begin{equation}\label{maineq-zero}
\p_t u = \sum_{i=1}^{2n} X_i A_i(x, \nabla_0 u) \quad \quad  \text{ in  \quad }Q=\Omega\times (0,T),
\end{equation}
modeled on the  parabolic $p$-Laplacian 
\begin{equation}\label{plap-para} \p_t u= \sum_{i=1}^{2n} X_i \bigg(|\nabla_0 u|^{p-2} X_i u\bigg),\end{equation}
where  $ 2\le p \le 4$ and $X_1,...,X_{2n}$ denote the horizontal left invariant frame in $\Hn$. 
In a previous study \cite{CCG}, Garofalo and the first two listed authors have extended techniques originally introduced by the third listed author \cite{Zhong} to establish $C^\infty$ smoothness for weak solutions to \eqref{maineq} in the range $2\le p <\infty$ under some additional non-degeneracy hypothesis. 
 In the present paper we show that in the restricted range of the nonlinearity $2\le p \le 4$, and without the additional non-degeneracy assumptions, one can obtain Lipschitz regularity of weak solutions.
 
We indicate with $x=(x_1,...,x_{2n},x_{2n+1})$ the variable point in $\Hn$. We will occasionally denote the variable $x_{2n+1}$ in the center of the group with the letter $z$. Consequently, we will indicate with $\p_i$ partial differentiation with respect to the variable $x_i$, $i=1,...,2n$, and use the notation $Z = \p_z$ for the partial derivative $\p_{x_{2n+1}}$. The notation $\nabla_0 u=\sum_{i=1}^{2n} X_i u X_i \cong (X_1 u,...,X_{2n} u)$ denotes the so-called {\it horizontal gradient} of the function $u$, where
$$X_i = \partial_i - \frac{x_{n+i}}2 \partial_{z},\ \ \ \ \ \   
X_{n+i} = \partial_{n+i} + \frac{x_{i}}{2} \partial_{z},\ \ \ \  i=1,...,n.$$
As it is well-known, the $2n+1$ vector fields $X_1,...,X_{2n}, Z$ are connected by the following commutation relation: for every couple of index $i, j$, if $j=i+n$, then $[X_i,X_j] = Z$; all other commutators being trivial.  

\noindent{\bf Structural assumptions.}  The relevant  assumptions on the vector-valued function $$(x,\xi)\to A(x,\xi)=(A_1(x,\xi),...,A_{2n}(x,\xi))$$ are that
 there exist $2\le p \le 4$,  and  $0<\lambda'\le \Lambda' <\infty$ such that for a.e. $x\in\Omega, \xi\in \R^{2n}$ and for all $\eta\in \R^{2n}$, one has
\begin{equation}\label{structure-zero}
\begin{cases}
 \lambda' |\xi|^{p-2} |\eta|^2 \le  \p_{\xi_j} A_i(x,\xi) \eta_i \eta_j \le \Lambda' |\xi|^{p-2} |\eta|^2,
 \\ 
|A_i(x,\xi)| +  |\p_{x_j} A_i(x,\xi)| \le  \Lambda' |\xi|^{p-1}.
\end{cases}
\end{equation}

Given an open set $\Omega\subset \Hn$, we indicate with $W^{1,p}(\Om)$ the Sobolev space associated with the $p$-energy $\mathscr E_{\Om,p}(u) = \frac 1p \int_{\Omega}|\nabla_0 u|^p$, i.e., the space of all functions $u\in L^p(\Omega)$ such that their distributional derivatives $X_i u$, $i=1,...,2n,$ are also in $L^p(\Omega)$. The corresponding norm
is $||u||^p_{W^{1,p}(\Omega)} = ||u||_{L^p(\Omega)}+||\nabla_0 u||_{L^p(\Omega)}.$ We 
will add the subscript {\it loc} for the local versions of such spaces, and denote by $W^{1,p}_0(\Om)$ the completion of $C^\infty_0(\Om)$ with respect to such norm.  A function $u\in L^p((0,T), W_{\loc}^{1,p}(\Om))$
is a weak solution of \eqref{maineq} in the cylinder $\Omega\times (0,T)$ if
\begin{equation}\label{weak}
\int_0^T \int_\Omega u \phi_t -  \sum_{i=1}^{2n} A_i(x,\nabla_0 u) X_i \phi  =0,
\end{equation}
for every $\phi\in C^{\infty}_0(Q)$.  Our main result is a Lipschitz regularity estimate for weak solutions, on parabolic cylinders $Q_{\mu,r}$ (see Definition \ref{cylinder}).

%
%
%
%
%
%
%
%

\begin{thrm}\label{main1}  
Let $A_i$ satisfy the structure conditions \eqref{structure-zero} and 
let $u\in L^p((0,T), W_{\loc}^{1,p}(\Om))$ be a weak solution of \eqref{maineq-zero}  in $Q=\Om\times (0,T)$.  If $2\le p \le 4$ then $|\nabla_0 u| \in L^\infty_{\loc}(Q)$ and $\partial_t u, Zu \in L^q_{\loc}(Q)$ for every $1\le q<\infty$. Moreover, one has
that  for any $Q_{\mu, 2r}\subset Q$,
\begin{equation}\label{desired}
\sup_{Q_{\mu, r}}  |\nabla_0u|
\le C  \max\Big( \Big(\frac{1}{\mu r^{N+2}}\int\int_{Q_{\mu,2r}} (\delta+|\nabla_0u|^2)^{\frac{p}{2}}\Big)^{\frac{1}{2}}, \mu^{\frac{p}{2(2-p)}}\Big),
\end{equation}
where $C=C(n,p,\lambda, \Lambda, r,\mu)>0$.  In the special case where there is no direct dependence on the space variable, i.e.  $A_i(x,\xi)=A_i(\xi)$, the parameters dependence  is more explicit, with
$C=C(n,p,\lambda, \Lambda) \mu^{\frac 1 2}>0$.
\end{thrm}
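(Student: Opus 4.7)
The plan is to regularize the equation so that all computations can be carried out on classical solutions, derive a priori estimates on $|\nabla_0 u|$ that are uniform in the regularization parameters, and then pass to the limit. Following the Riemannian approximation scheme of \cite{CCG}, we replace the degenerate coefficient $|\xi|^{p-2}$ by the non-degenerate $(\ddelta+|\xi|^2)^{(p-2)/2}$, and enlarge the horizontal frame by appending $\e Z$ to $X_1,\dots,X_{2n}$, so as to obtain a uniformly parabolic, non-degenerate problem whose weak solutions $\ued$ are smooth. It then suffices to prove \eqref{desired} for $\ued$ with constants independent of $\e,\ddelta>0$ and to let $\e,\ddelta\to 0^+$.

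For smooth $\ued$, I would differentiate the regularized equation in each $X_k$, $k=1,\dots,2n$, and test against $\phi^2 X_k \ued\,(\ddelta+|\nabla_0 \ued|^2)^{\gamma}$, where $\phi$ is a parabolic cut-off and $\gamma\ge 0$ is a free parameter. Summing in $k$ produces a family of Caccioppoli-type energy inequalities for $g_\ddelta:=(\ddelta+|\nabla_0\ued|^2)^{1/2}$. The time term reorganizes into $\int \phi^2 \p_t (g_\ddelta^{2\gamma+2})$ and is absorbed through integration by parts in $t$ using a time cut-off. The essential complication is that the commutator $[X_i,X_{i+n}]=Z$ forces the right-hand side to contain a term involving the vertical derivative $Zu$, which is not directly controlled by the structural hypotheses \eqref{structure-zero}.

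Closing the chain therefore requires a separate Caccioppoli inequality in the vertical direction, obtained by differentiating the regularized equation in $Z$ and testing against $\phi^2 Z\ued\, g_\ddelta^{2\beta}$. Since $Z$ commutes with itself, this yields bounds for integrals of the form $\int \phi^2 g_\ddelta^{p-2}|Zu|^{2\beta+2}$ in terms of horizontal second-order quantities, which can be fed back into the horizontal Caccioppoli. This coupling between horizontal and vertical energies, in which the ``new, more elegant approach'' advertised in the introduction should operate, is the main technical obstacle, and it is precisely where the restriction $2\le p\le 4$ enters: the interpolation exponents close the system only in this range, while for $p>4$ the vertical contributions cannot be absorbed without an a priori $L^\infty$ bound on $\nabla_0 u$, which is what one is trying to prove.

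Combining the two Caccioppoli inequalities with the Sobolev embedding on $\Hn$ (of homogeneous dimension $N=2n+2$) yields a reverse H\"older inequality for $g_\ddelta^{p/2}$ on intrinsic parabolic cylinders $Q_{\mu,r}$. A Moser iteration on a nested family of such cylinders then produces the $L^\infty$ bound in \eqref{desired}; the alternative term $\mu^{p/(2(2-p))}$ accounts for the regime in which $|\nabla_0 u|\lesssim \mu$, where the intrinsic scaling degenerates. Passing $\e,\ddelta\to 0^+$ gives the estimate for weak solutions of \eqref{maineq-zero}, and the sharper constant in the $x$-independent case follows because $\p_{x_j}A_i$ drops out of every Caccioppoli identity. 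Finally, the claim $\p_t u, Z u\in L^q_{\loc}$ for every $q<\infty$ is recovered by inserting the newly acquired $L^\infty$ bound on $\nabla_0 u$ into the vertical Caccioppoli and into the equation itself.
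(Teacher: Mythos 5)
Your overall architecture (regularize via the $\delta$- and $\e$-approximations, prove a horizontal and a vertical Caccioppoli inequality for the smooth approximants, couple them, run a Moser iteration on intrinsic cylinders, pass to the limit) matches the paper's. But the mechanism you propose for closing the coupling between the horizontal and vertical energies does not work as described, and the actual closing device --- the heart of the paper's argument --- is absent. You assert that differentiating the equation in $Z$ and testing against $\phi^2 Zu\,g_\delta^{2\beta}$ ``yields bounds for integrals of the form $\int\phi^2 g_\delta^{p-2}|Zu|^{2\beta+2}$ in terms of horizontal second-order quantities, which can be fed back into the horizontal Caccioppoli.'' That is backwards: the vertical Caccioppoli (Lemma \ref{stimaZu}) controls $\int g_\delta^{p-2}|Zu|^{\beta}|\nabla_0 Zu|^2$ \emph{by} weighted integrals of $|Zu|^{\beta+2}$, not the other way around; and the naive pointwise bound $|Zu|\le 2|\nabla_0^2u|$ inserted into the bad term $\int g_\delta^{p-2+\beta}|Zu|^2\eta^2$ of the horizontal Caccioppoli produces exactly the second-order quantity appearing on its left-hand side with the same weight, with no smallness to absorb it. So the two inequalities you write down do not, by themselves, yield an a priori bound on any $L^q$ norm of $Zu$ in terms of the gradient energy, and the reverse H\"older inequality you need for the iteration never materializes.

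What the paper actually does is insert a third ingredient, the Poincar\'e-type interpolation inequality of Lemma \ref{SobXU} (from \cite{CLM}): one writes $|Zu|^{p+\beta}=|Zu|^{p-2+\beta}Zu\,(X_lX_{n+l}u-X_{n+l}X_lu)$ and integrates by parts so that the derivative falls on $|Zu|^{p-2+\beta}Zu$, producing $\nabla_0 Zu$ --- precisely the quantity the vertical Caccioppoli controls. A H\"older/Young bookkeeping then bounds $\int|Zu|^{p+\beta}\eta^{p+\beta}$ solely by $\int(\delta+|\nabla_0u|^2)^{(p+\beta)/2}$ (Lemma \ref{mainlemma}), and it is in this H\"older step, where the energy $R$ enters with exponent $\tfrac{4-p}{2(p+\beta)}$ as in \eqref{ML3}, that the restriction $2\le p\le 4$ is used --- so your intuition about \emph{where} the range limitation enters is right, but you have not supplied the inequality in which it operates. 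Only after this $L^{p+\beta}$ bound on $Zu$ is available can the term $\int g_\delta^{p-2+\beta}|Zu|^2\eta^2$ be absorbed by H\"older, yielding Proposition \ref{cor1} and hence the iteration. Without Lemma \ref{SobXU} (or an equivalent substitute) your proof does not close. The remaining steps of your proposal (Moser iteration, the role of the alternative term $\mu^{p/(2(2-p))}$, the sharper constant when $A_i=A_i(\xi)$, and the $L^q$ integrability of $\p_tu$ via the time-differentiated equation once $\nabla_0u\in L^\infty_{\loc}$) are consistent with Sections \ref{lipschitz estimate} and \ref{time-derivative}.
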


The boundedness of the gradient of solutions to equation (\ref{maineq-zero}) in the setting of Euclidean spaces is well-known \cite{DF, Ch, DB}. The above theorem shows that this is also the case in the setting of Heisenberg group for the range $2\le p\le 4$. We believe that it is true for all range $1<p<\infty$ if the solution is bounded as in the Euclidean setting. 
The proof of the main theorem is based on the Caccioppoli type estimate in Proposition \ref{cor1}, which is new in the subelliptic context. The Lipschitz regularity follows  then through a Moser type iteration, which we present in detail in Section \ref{lipschitz estimate}. The Caccioppoli estimate in Proposition \ref{cor1} is derived using two main ingredients: The first of these  is an approximation scheme, that allows us to invoke the regularity results from \cite{CCG}, thus dealing with smooth approximants that can be differentiated directly, without recurring to fractional difference quotients. One of the  original contributions of the present paper is that we can avoid the extra assumption of Riemannian approximation which is needed in \cite{CCG} (hypotheses (1.6) and (1.7) in \cite{CCG}), and in fact we prove that our structure hypotheses \eqref{structure-zero} imply that such approximation always exists. The second ingredient is a Poincar\'e-type inequality for smooth functions, which was originally established in \cite{CLM}. This latter estimate, in Lemma \ref{SobXU}, is the only point in the paper where we are forced to impose the limited range $2\le p \le 4$. In fact, we believe that such constraint is not needed for the Caccioppoli inequality in Proposition \ref{cor1} and we plan to return to this point in future work.

To the best of our knowledge, the present paper and \cite{CCG} are the first instances in the literature of the study of higher regularity for weak solutions of the non stationary $p$-Laplacian PDE   in the sub-Riemannian setting. Both are based on  techniques introduced by Zhong in \cite{Zhong}. By contrast, the stationary case has been far more developed, mostly  in the Heisenberg group case. We mention here the contributions of  Domokos \cite{Dom}, Manfredi, Mingione \cite{Manfredi-Mingione}, Mingione, Zatorska-Goldstein and Zhong \cite{Mingione-Zatorska-Zhong}, Ricciotti \cite{Ricciotti}, \cite{Ricciotti1} and eventually those in  \cite{Zhong,MZ}, where   the horizontal $C^{1,\alpha}$ regularity in the full range $1<p<\infty$ is proved. Regularity in more general  contact sub-Riemannian manifolds, including the rototraslation group,  has been recently established by  two of the authors and coauthors \cite{CCLDO} and independently by Mukherjee 
\cite{M} based on an extension of the techniques in \cite{Zhong}. More recently Domokos and Manfredi \cite{DomMan} have studied regularity in higher steps groups and in some special non-group structures.

The structure of the paper is as follows: In Section \ref{preliminaries} we review some preliminary definitions and results from \cite{CCG} and lay out the approximation scheme, thus reducing the problem to finding  estimates for smooth solutions $u_\delta$ of approximating regularized equations, which are stable as $\delta\to 0$. From that point on, we will simplify the notation by dropping the script $\delta$ and by focusing on the case $A_i(x,\xi)=A_i(x)$, thus highlighting how in this case we can obtain more explicit constants in the right hand side of our estimates. In Section \ref{old energy estimates} we recall some energy type estimates from \cite{CCG}.  In Section \ref{new energy estimates} we show that   derivatives of weak solutions along the center are in every $L^q_{loc}$ space, $q\ge 2$, uniformly in $\delta>0$ and establish the key Caccioppoli type inequality in Proposition \ref{cor1}. In this section we need to use the limitation $2\le p \le 4$ in the proof of a Sobolev type estimate. We conjecture that with the exception of  Lemma \ref{SobXU}, all other estimates continue to hold in the range $2\le p<\infty$. Using  Proposition \ref{cor1}, in Section \ref{lipschitz estimate} we prove that the solutions are locally Lipschitz continuous in the subRiemannian metric (i.e. the horizontal gradient is in $L^\infty_{loc}$) uniformly in $\delta$. 
We note explicitly that the Moser iteration in Section
\ref{lipschitz estimate} involves a Sobolev type estimate, which is also stable as $\delta\to 0$, in view of the results in \cite{CC}. Section \ref{time-derivative} addresses the higher integrability of the time derivatives $\p_t u$ of weak solutions.

\noindent {\it Acknowledgements} The authors are grateful to Nicola Garofalo for  conversations around the topics of this paper. Indeed, the results presented here are a development of  some initial work that the authors did jointly with him.

\section{Approximating weak solutions via regularizations}\label{preliminaries}

As mentioned in the introduction, our strategy for the proof of the Lipschitz regularity is to locally 
approximate the weak solutions of \eqref{maineq-zero} with smooth solutions  $u_\delta$ of less degenerate PDE such as \eqref{maineq} and prove  estimates on such approximate solutions that are uniform as $\delta\to 0$.  This approximation is built using both the regularity results in \cite{CCG},  recalled below in Theorem \ref{CCG}, and a Riemannian approximation scheme (see \cite{Montgomery,CC} and references therein). We start by recalling the main points of the latter. 
First, we will use interchangeably the notation $Z$ and $X_{2n+1}$ for the generator of the center of the Lie algebra.  The left invariant sub-Riemannian metric $(\Hn,g_0)$  defined by $\langle X_i, X_j \rangle_0 =\delta_{ij}$, for $i,j=1,...,2n$, can be approximated in the Gromov-Hausdorff sense through a sequence of Riemannian metrics $g_\e$, for $\e\to 0^+$, defined by imposing that 
$X_1,...,X_{2n},\e Z$ is an orthonormal $g_\e-$frame for all $\e>0$. In the terminology of \cite{Montgomery}, the metrics $g_\e$ {\it tame} the metric $g_0$.  We relabel the vectors in this frame as $X_1^\e, ..., X_{2n+1}^\e$. The corresponding gradient
\[\nabla_\e u= \sum_{i=1}^{2n} X_i u X_i + \e^2 Zu Z= \sum_{i=1}^{2n+1} X_i^\e u X_i^\e
\] 
has the obvious property that 
$\nabla_\e u \to (\nabla_0 u, 0)$ as $\e \to 0$.  We note explicitly that 
\[| \nabla_\e u|_{\e}^2:=| \nabla_\e u |_{g_\e}^2=\sum_{i=1}^{2n} (X_i u)^2+ \e^2 (Zu)^2\to |\nabla_0 u|_0,
\] 
as $\e\to 0$. For $\delta>0$, the $\delta-$regularized Riemannian $p-$Laplacian, i.e. the operator related to the Euler-Lagrange equations for the $p$-energy $\int |\nabla_\e u|_\varepsilon^p dx$, is
\begin{equation}\label{epsilon-plap} L_p^\e u := \sum_{i=1}^{2n+1} X_i^\e (  [\delta+| \nabla_\e u|_\e^2]^{\frac{p-2} {2}}X_i^\e u),\end{equation}
and provides a  natural (quasilinear) elliptic regularization of the subelliptic $p$-Laplacian.

Next, we recall the regularity theorem proved in \cite{CCG}.
\begin{thrm}[\cite{CCG}]\label{CCG}  For $\Omega\subset \Hn$, $2\le p < \infty$, and  $\delta> 0$, assume that the
the functions $A_{i,\delta}:\Om\times \R^{2n}\to \R$, $i=1,...,2n$ satisfy the following structure conditions: 

\noindent {\bf (i)} For some $\lambda, \Lambda>0$ depending only on $\lambda', \Lambda'$, one has
\begin{equation}\label{structure1}
\begin{cases}
\lambda (\ddelta+|\xi|^2)^{\frac{p-2}{2}} |\eta|^2 \le  \p_{\xi_j} A_{i, \delta} (x,\xi) \eta_i \eta_j \le \Lambda (\ddelta+|\xi|^2)^{\frac{p-2}{2}} |\eta|^2, 
\\
|A_i (x,\xi)| +  |\p_{x_j} A_{i, \delta} (x,\xi)| \le  \Lambda (\ddelta+|\xi|^2)^{\frac{p-1}{2}}.
\end{cases}
\end{equation}
\noindent {\bf (ii)} We assume that one can approximate $A_{i,\delta}$ by  a $1$-parameter family of regularized approximants $A^\e_\delta(x,\xi)=(A_{1,\delta}^\e(x,\xi),...,A_{2n+1,\delta}^\e(x,\xi))$  defined for a.e. $x\in \Om$ and every $\xi \in \R^{2n+1}$, and such that 
 for a.e. $x\in\Omega, $  for all $\xi =  \sum_{i=1}^{2n} \xi_i X_i^\e + \xi_{2n+1} X_{2n+1}^\e$ , and $\xi^\e =  \sum_{i=1}^{2n} \xi_i X_i^\e + \e \xi_{2n+1} X_{2n+1}^\e$   one has uniformly on compact subsets of $\Om$,
\begin{equation}\label{structure2}
(A_{1,\delta}^\e(x,\xi^\e),...,A_{2n+1,,\delta}^\e(x,\xi^\e))\ \underset{\e\to 0^+}{\longrightarrow}\    (A_{1,,\delta}(x, \xi_1,...,\x_{2n}), ..., A_{2n,\delta} (x, \xi_1,...,\x_{2n}),0),
\end{equation}
and furthermore
\begin{equation}\label{structure-epsilon}
\begin{cases}
\lambda (\ddelta+|\xi|^2)^{\frac{p-2}{2}} |\eta|^2 \le  \p_{\xi_j} A_{i,\delta}^\e(x,\xi) \eta_i \eta_j \le \Lambda (\ddelta+|\xi|^2)^{\frac{p-2}{2}} |\eta|^2, 
\\
|A_{i,\delta}^\e(x,\xi)| +  |\p_{x_j} A_{i,\delta}^\e(x,\xi)| \le  \Lambda (\ddelta+|\xi|^2)^{\frac{p-1}{2}},
\end{cases}
\end{equation}
 for all $\eta\in \R^{2n+1}$, and 
 for some $0<\lambda\le \Lambda <\infty$ independent of $\e$.
 Let $u_\delta\in L^p((0,T), W_{\loc}^{1,p}(\Om))$  be a weak solution of  \begin{equation}\label{maineq}
\p_t u_\delta = \sum_{i=1}^{2n} X_i A_{i, \delta}(x, \nabla_0 u_\delta). \end{equation} in $Q=\Om\times (0,T)$.
 If $\delta>0$ then $u_\delta$  is $C^\infty$ smooth in $Q$.
 \end{thrm}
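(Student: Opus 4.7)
My plan is to obtain the smoothness of $u_\delta$ by invoking the Riemannian approximation scheme described just before \eqref{epsilon-plap} and passing to the limit $\e\to 0^+$. For fixed $\e>0$, I would consider the Cauchy--Dirichlet problem for
\begin{equation*}
\p_t \ued = \sum_{i=1}^{2n+1} X_i^\e A_{i,\delta}^\e(x,\nabla_\e \ued)
\end{equation*}
on $Q$, with data matching $u_\delta$. Hypothesis \eqref{structure-epsilon}, combined with $\delta>0$, makes this equation uniformly parabolic in the Euclidean sense, since the ellipticity constants are bounded below by $\lambda \ddelta^{(p-2)/2}$ independently of $\e$ (once one also bounds the solutions themselves). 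Classical regularity theory for quasilinear uniformly parabolic PDE with smooth coefficients (Ladyzhenskaya--Solonnikov--Uraltseva) then gives $\ued \in C^\infty(Q)$ for every $\e>0$.

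The core of the argument is to produce a priori estimates on $\ued$ that are uniform in $\e\in(0,1]$, so that the limit remains smooth. Differentiating the approximating equation along $X_k^\e$ for $k=1,\ldots,2n+1$ produces a linear parabolic equation for $X_k^\e \ued$ whose principal part is uniformly elliptic in terms of $\delta$. The non-trivial commutators $[X_i,X_{n+i}]=Z$ couple the horizontal derivatives to the central derivative $Z\ued$. The strategy is to first derive a Caccioppoli-type estimate for $\nabla_\e \ued$ in $L^p_{\loc}$, then to iterate, along the lines of Zhong \cite{Zhong} and \cite{MZ}, on powers of $|\nabla_\e \ued|^2$ in order to obtain a uniform $L^\infty_{\loc}$ bound on $|\nabla_\e \ued|$. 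With this Lipschitz control in hand, \eqref{structure-epsilon} guarantees that the equation linearized along any vector field has bounded and uniformly elliptic coefficients (with bounds depending on $\delta$ and the Lipschitz constant but not on $\e$), so Krylov--Safonov, Schauder, and a bootstrap furnish uniform $C^{k,\alpha}_{\loc}$ bounds for every $k$.

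The delicate step, and the main obstacle, is the uniform control of the central derivative $Z\ued$. This is achieved by differentiating the equation along $Z$, testing against carefully chosen multipliers built from powers of $|\nabla_\e \ued|^2$, and absorbing the commutator terms via the horizontal Caccioppoli inequalities just established. This is precisely the chain of estimates developed in \cite{CCG}; hypothesis (ii) --- the convergence \eqref{structure2} of the regularized coefficients together with the uniform bounds \eqref{structure-epsilon} --- ensures that all the quantitative constants in these estimates are stable as $\e\to 0^+$ and match the original operator $A_{i,\delta}$ in the limit.

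Once the uniform-in-$\e$ bounds in every $C^{k,\alpha}_{\loc}$ norm are in place, an Arzel\`a--Ascoli compactness argument extracts a subsequence $\ued \to v$ in $C^{k,\alpha}_{\loc}(Q)$ for every $k$. Using \eqref{structure2} one verifies that $v$ is a weak solution of \eqref{maineq} with the same data as $u_\delta$; since the equation is uniformly parabolic for $\delta>0$, weak solutions are unique, so $v=u_\delta$ and hence $u_\delta\in C^\infty(Q)$. I expect the hardest technical point to be the stability as $\e\to 0^+$ of the $Z\ued$ estimate; outside the regime $\delta>0$ this control becomes the genuinely subelliptic problem that motivates the rest of the present paper.
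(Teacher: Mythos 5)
Your proposal is correct and follows essentially the same route as the cited source: the paper does not reprove Theorem \ref{CCG} but imports it from \cite{CCG}, whose argument is exactly the one you outline --- solve the uniformly parabolic $\e$-regularization, establish Caccioppoli estimates for the horizontal and central derivatives and a Moser-type iteration uniformly in $\e$ (these are the estimates recalled here as Lemmas \ref{stimaZu} and \ref{lemma3.4}), and pass to the limit $\e\to 0^+$ identifying the limit with $u_\delta$ by comparison. No substantive gap to report.
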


One of our main contributions in the present paper is that one can avoid the assumptions \eqref{structure2}, and \eqref{structure-epsilon}, and build the Riemannian approximation using solely the structure condition \eqref{structure1}. Our result is stated in the following proposition

\begin{prop}\label{approximation theorem}
Let $A_i$ be as in \eqref{structure-zero}. For every $\delta>0$ there exists $A_{i,\delta}$ such that
\begin{equation}\label{approx-structure}
A_{\delta} (x,\xi)\ \underset{\delta\to 0^+}{\longrightarrow}\    A(x, \xi),
\end{equation} 
satisfying the hypothesis \eqref{structure1}, \eqref{structure2}, and \eqref{structure-epsilon} with constants depending only on the original $\lambda',\Lambda'$.  Moreover, if a function $A_{i,\delta}$ satisfies \eqref{structure1}, then it also satisfies \eqref{structure2}, and \eqref{structure-epsilon} with constants depending only on the original $\lambda^\prime,\Lambda^\prime$.
\end{prop}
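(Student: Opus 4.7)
The plan is to handle the proposition in two stages, matching the two assertions in its statement. The first, straightforward stage constructs $A_{i,\delta}$ from $A_i$ via the additive regularization
$$A_{i,\delta}(x,\xi) := A_i(x,\xi) + \delta^{(p-2)/2}\xi_i.$$
Convergence $A_{i,\delta} \to A_i$ as $\delta \to 0^+$ is immediate, and \eqref{structure1} follows by direct computation: the additive piece contributes $\delta^{(p-2)/2}\delta_{ij}$ to $\partial_{\xi_j}A_{i,\delta}$, so
$$\partial_{\xi_j}A_{i,\delta}(x,\xi)\,\eta_i\eta_j = \partial_{\xi_j}A_i(x,\xi)\,\eta_i\eta_j + \delta^{(p-2)/2}|\eta|^2,$$
and the elementary equivalence $|\xi|^{p-2} + \delta^{(p-2)/2} \approx (\delta+|\xi|^2)^{(p-2)/2}$, valid for $p \ge 2$, yields the desired two-sided bound on the quadratic form; the growth bounds on $A_{i,\delta}$ and $\partial_{x_j}A_{i,\delta}$ are equally elementary.

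The substantive second stage asserts that any $A_{i,\delta}$ satisfying \eqref{structure1} admits a Riemannian extension $A_{i,\delta}^\e$ on $\R^{2n+1}$ satisfying \eqref{structure2} and \eqref{structure-epsilon}; this at once delivers the first part of the proposition. Writing $\xi=(\xi_H,\xi_{2n+1})$ with $\xi_H=(\xi_1,\ldots,\xi_{2n})$, the natural candidate is
$$A_{i,\delta}^\e(x,\xi) := A_{i,\delta}(x,\xi_H)\quad (i \le 2n), \qquad A_{2n+1,\delta}^\e(x,\xi) := (\delta+|\xi|^2)^{(p-2)/2}\xi_{2n+1}.$$
Consistency with \eqref{structure2} is immediate upon substitution of $\xi^\e = (\xi_H,\e\xi_{2n+1})$, since the first $2n$ components are independent of $\e$ and the last one vanishes as $\e \to 0$. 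The growth bound in \eqref{structure-epsilon} transfers directly from the corresponding bound in \eqref{structure1}. For the ellipticity, the $2n \times 2n$ upper-left block of the Jacobian provides ellipticity in the $\eta_H$ directions, the diagonal entry $(\delta+|\xi|^2)^{(p-2)/2}+(p-2)(\delta+|\xi|^2)^{(p-4)/2}\xi_{2n+1}^2$ provides full ellipticity in the $\eta_{2n+1}$ direction, and the off-diagonal couplings $(p-2)(\delta+|\xi|^2)^{(p-4)/2}\xi_j\xi_{2n+1}$ are controlled by Young's inequality.

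The main obstacle, and the one on which I expect to spend the most effort, is that in the regime $|\xi_{2n+1}| \gg |\xi_H|$ the horizontal block gives only $(\delta+|\xi_H|^2)^{(p-2)/2}$-ellipticity, strictly weaker than the $(\delta+|\xi|^2)^{(p-2)/2}$ demanded by \eqref{structure-epsilon}. I plan to close this gap by either multiplying the horizontal components of the extension by the conformal factor $\Phi(\xi) := ((\delta+|\xi|^2)/(\delta+|\xi_H|^2))^{(p-2)/2}$ (which equals $1$ on the slice $\xi_{2n+1}=0$, so \eqref{structure2} is unaffected), or by adding an isotropic model term $c(\delta+|\xi|^2)^{(p-2)/2}\xi_i$ to $A_{i,\delta}^\e$ and absorbing the baseline $c(\delta+|\xi_H|^2)^{(p-2)/2}\xi_i$ into the definition of $A_{i,\delta}$, which remains compatible with \eqref{structure1}. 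In either route the final matrix inequality reduces to combining the given ellipticity of $A_{i,\delta}$ with the coercivity of the model tensor $(\delta+|\xi|^2)^{(p-2)/2}I + (p-2)(\delta+|\xi|^2)^{(p-4)/2}\xi\otimes\xi$ and using Young's inequality on the cross terms, yielding constants $\lambda,\Lambda$ depending only on $p,\lambda',\Lambda'$.
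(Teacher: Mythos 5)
Your proposal is correct and, via your second fix, is essentially the paper's argument: the paper defines $A_{i,\delta,\e}(x,\xi)=\tilde A_i(x,\xi_H)+\lambda(\delta+|\xi|_\e^2)^{\frac{p-2}{2}}\xi_i$ for $i=1,\dots,2n+1$, i.e.\ it adds the full $(2n+1)$-dimensional isotropic regularized $p$-Laplacian term so that the vertical direction and the regime $|\xi_{2n+1}|\gg|\xi_H|$ are handled by the model tensor while the original $A$ contributes only a nonnegative form on the horizontal block --- exactly the obstacle you identified and the resolution you sketch in option (b). Your conformal-factor alternative (a) is not what the paper does and would require extra care with the cross terms $A_{i,\delta}\,\partial_{\xi_{2n+1}}\Phi$, so commit to the additive route.
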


In view of Theorem \cite{CCG}, the latter yields immediately the following
\begin{cor}\label{approx}
Let $u$ be a weak solution of \eqref{maineq-zero} in $Q=\Omega\times (0,T)$, with the structure conditions \eqref{structure-zero}. For any sub-cylinder $Q_1=\Omega_1\times (t_1,t_2) \subset \subset  \Omega\times (0,T)$,  there exists a  sequence $\{u_{\delta}\}$  of smooth solutions of the
 regularized problem 
\begin{equation}\label{maineq}
\p_t u_\delta = \sum_{i=1}^{2n} X_i A_{i, \delta}(x, \nabla_0 u_\delta) \quad \quad  \text{ in  \quad }Q_1, \text{ and } u_\delta=u \quad \text{ on }\partial_p  \ Q_1\end{equation}
converging to $u$, as $\delta\to 0^+$,  uniformly on compacts subsets of $Q_1$ and weakly in the $W^{1,p}$-norm.  Here we have denoted by $\partial_p Q_1=\Omega_1\times \{t=t_1\}\cup \partial \Omega_1 \times (t_1,t_2)$ the parabolic boundary of $Q_1$. The functions $A_\delta$ satisfy \eqref{approx-structure}, \eqref{structure1}, \eqref{structure2}, and \eqref{structure-epsilon} with constants depending only on the original $\lambda',\Lambda'$.
\end{cor}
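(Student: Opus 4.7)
The plan is to combine Proposition \ref{approximation theorem}, a solvability result for the non-degenerate approximating PDE, and a standard monotone-operator passage-to-the-limit argument. First, I apply Proposition \ref{approximation theorem} to obtain, for every $\delta>0$, coefficients $A_{i,\delta}$ satisfying simultaneously \eqref{approx-structure}, \eqref{structure1}, \eqref{structure2} and \eqref{structure-epsilon}, with constants controlled by $\lambda',\Lambda'$. Fix a subcylinder $Q_1=\Omega_1\times(t_1,t_2)\subset\subset Q$; since $u\in L^p((0,T),W^{1,p}_{\loc}(\Omega))$, the parabolic trace $u|_{\partial_p Q_1}$ is well defined. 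For each $\delta>0$ the operator $v\mapsto -\sum_{i=1}^{2n}X_i A_{i,\delta}(x,\nabla_0 v)$ is strictly monotone and coercive on the horizontal Sobolev space $W^{1,p}(\Omega_1)$ thanks to the lower bound in \eqref{structure1}, and a standard Galerkin/Browder--Minty construction in the Gelfand triple $W^{1,p}_0(\Omega_1)\hookrightarrow L^2(\Omega_1)\hookrightarrow W^{-1,p'}(\Omega_1)$ produces a unique weak solution $u_\delta$ of \eqref{maineq} on $Q_1$ with $u_\delta=u$ on $\partial_p Q_1$. Theorem \ref{CCG}, whose hypotheses \eqref{structure1}, \eqref{structure2}, \eqref{structure-epsilon} are now all verified, then upgrades $u_\delta$ to $C^\infty(Q_1)$.

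To pass to the limit as $\delta\to 0^+$, I test the equation for $u_\delta-u$ against itself to produce a $\delta$-independent bound on $\|\nabla_0 u_\delta\|_{L^p(Q_1)}$, together with the corresponding bound $\|\partial_t u_\delta\|_{L^{p'}((t_1,t_2),W^{-1,p'}(\Omega_1))}\le C$ coming from the growth bound in \eqref{structure1}. An Aubin--Lions compactness argument then gives a subsequence converging strongly in $L^p(Q_1)$ and weakly in $L^p((t_1,t_2),W^{1,p}(\Omega_1))$ to some limit $v$, and uniform local H\"older regularity of the $u_\delta$ (available in the sub-Riemannian setting through the De Giorgi--Nash--Moser technology for the regularized equation) promotes this to locally uniform convergence. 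The Minty--Browder monotonicity trick, combined with the uniform-on-compacts convergence $A_{i,\delta}\to A_i$ given by \eqref{approx-structure}, identifies the weak limit of $A_{i,\delta}(x,\nabla_0 u_\delta)$ with $A_i(x,\nabla_0 v)$, so that $v$ is a weak solution of \eqref{maineq-zero} on $Q_1$ with $v=u$ on $\partial_p Q_1$; uniqueness for this limit Dirichlet problem, itself a consequence of the monotonicity inequality from \eqref{structure-zero}, forces $v=u$.

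The main technical obstacle is the Minty--Browder identification, because the limit operator is genuinely degenerate. The monotonicity estimate
\[
\bigl(A_i(x,\xi)-A_i(x,\eta)\bigr)(\xi_i-\eta_i)\ge c_p\bigl(|\xi|^2+|\eta|^2\bigr)^{\frac{p-2}{2}}|\xi-\eta|^2,
\]
obtained by integrating \eqref{structure-zero} along the segment from $\eta$ to $\xi$, degenerates on the set $\{\nabla_0 u=0\}$, so in order to upgrade weak to almost everywhere convergence of the gradients one must additionally invoke weak lower semicontinuity of convex integrands, or equivalently the classical trick of testing against $u_\delta-v$ and exploiting the monotone structure. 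Modulo this step, the scheme is a routine adaptation to the horizontal Sobolev framework of the well-known monotone-operator proof for the Euclidean parabolic $p$-Laplacian.
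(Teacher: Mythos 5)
Your proposal is correct and shares the paper's overall skeleton (regularize the coefficients via Proposition \ref{approximation theorem}, solve the non-degenerate Dirichlet problem \eqref{maineq} on $Q_1$ with boundary data $u$, invoke Theorem \ref{CCG} for smoothness, then pass to the limit $\delta\to 0^+$), but the limit passage is carried out by a genuinely different route. The paper's own proof is much shorter and leans on external machinery: it cites \cite{ACCN} for interior H\"older continuity and Caccioppoli estimates that are uniform in $\delta$, and then uses the comparison principle together with \eqref{approx-structure} and \eqref{structure1} to conclude $u_\delta\to u$ uniformly on compacts and weakly in $W^{1,p}$; even the existence and smoothness of $u_\delta$ is routed through the Riemannian approximants $u_{\delta,\e}$ and identified via the comparison principle. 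You instead give a self-contained monotone-operator argument: Browder--Minty/Galerkin existence in the Gelfand triple, a $\delta$-uniform energy bound from testing the equation for $u_\delta-u$ against itself, Aubin--Lions compactness, Minty identification of the limit, and uniqueness of the limit Dirichlet problem to force $v=u$; the locally uniform convergence still has to be imported from the same De Giorgi--Nash--Moser input that the paper takes from \cite{ACCN}, so the two proofs are not independent on that point. One remark: your concern about degeneracy of the monotonicity on $\{\nabla_0 u=0\}$ is unfounded for $p\ge 2$, since $\bigl(|\xi|^2+|\eta|^2\bigr)^{\frac{p-2}{2}}|\xi-\eta|^2\ge c_p|\xi-\eta|^p$; combined with the fact that $A_{i,\delta}(x,\nabla_0 u)-A_i(x,\nabla_0 u)=\lambda\delta^{\frac{p-2}{2}}X_iu\to 0$ in $L^{p'}(Q_1)$, the single energy identity for $u_\delta-u$ already yields \emph{strong} $L^p$ convergence of the horizontal gradients, so the Aubin--Lions and Minty--Browder steps can be dispensed with entirely. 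Your approach buys independence from the comparison principle (useful if one wished to treat systems, cf. the open problems in the last section), at the cost of being longer than the paper's two-line appeal to \cite{ACCN}.
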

\begin{proof}
Let $u$ be a weak solution of  \eqref{maineq-zero} in $Q$. In view of the results in \cite{ACCN} we know that the 
solution is H\"older continuous in compact subsets.  For $\delta>0$,  let $A_{i,\delta}$ be as in the statement of Proposition \ref{approximation theorem}, and consider  the unique weak  solution 
$u_{\delta}$ of \eqref{maineq}.  In view of the comparison principle, the uniform continuity  and Caccioppoli inequalities for $\{u_\delta\}$ proved in \cite{ACCN} and of \eqref{approx-structure}, \eqref{structure1} one can easily see that $u_\delta \to u$ uniformly on compact subsets of $Q_1$ and weakly 
in the $W^{1,p}$-norm. In order to conclude the proof, we  only need to observe that $u_\delta$ are smooth (with the regularity possibly depending on $\delta>0$ of course) thanks to  
Theorem \ref{approximation theorem} and Theorem \ref{CCG}.
In fact,  one can apply the results from \cite{CCG}, to derive regularity estimates that are uniform in the parameter $\e$, thus  yielding that the family $u_{\delta, \varepsilon}$ has a subsequence converging to a solution of the problem 
\eqref{maineq} as $\varepsilon \to 0$, which coincides with $u_\delta$ in view the comparison principle. 
\end{proof}

We are left with the task of proving Proposition \ref{approximation theorem}. To better illustrate the argument of the proof  we present the special, simpler,  case of the $p$-Laplacian \eqref{plap-para}, i.e. for $\xi  = \sum_{i=1}^{2n} \xi_i X_i\in \R^{2n}$, and $x\in \Hn$,
$$A_i(x,\xi)=|\xi|^{p-2}\xi_i, \quad \quad i=1,...,2n.$$
 In this case, we consider for each $\delta>0$, the  following functions
 \begin{equation}\label{plap-para-delta}
 A_{i,\delta}(x,\xi)= (\delta+|\xi|^2)^{\frac{p-2}{2}}\xi_i, \quad \quad i=1,...,2n,
 \end{equation}
 and for each $\e>0$, and $\xi  = \sum_{i=1}^{2n+1} \xi_i X_i^\e\in \R^{2n+1}$,
 \begin{equation}\label{plap-para-delta-epsilon }
  A_{i,\delta}^\e(x,\xi)= (\delta+\|\xi\|_{g_\e(x)}^2)^{\frac{p-2}{2}}\xi_i,\quad \quad i=1,...,2n+1.
 \end{equation}
 While the quantity $||\cdot||_{g_\e(x)}$ a priori depends on $x\in \Hn$, we remark that when $\xi$ is a left invariant vector field, since $g_\e$ is left invariant as well, the dependence of $||\xi||_{g_\e(x)}$ on the point $x$ vanishes.

\begin{proof}[Proof of  Proposition \ref{approximation theorem}]
Following the intuition from the example above, we construct the approximates through a two steps process.
For $0<\delta<1$, let us define 
 \begin{equation}\label{approx1}
 A_{\delta} (x,\xi)= A (x,\xi) + \lambda\delta ^{\frac{p-2}{2}} \xi 
 \end{equation}
It is clear that 
\begin{equation}\label{approx-structure}
A_{\delta} (x,\xi)\ \underset{\delta\to 0^+}{\longrightarrow}\    A(x, \xi)
\end{equation}
and furthermore, for some $\lambda, \Lambda>0$ depending only on $\lambda', \Lambda'$, one has the estimate \eqref{structure1}.

For each  $\xi  = \sum_{i=1}^{2n+1} \xi_i X_i^\e\in \R^{2n+1}$, and $\e,\delta>0$ we set 
\begin{equation}
\label{approx2} A_{i,\delta, \varepsilon} (x,\xi) =  \tilde A_{i}(x,\xi_H) + \lambda  (\delta + |\xi|_\e^2)^{\frac{p-2}{2}} \xi_i,
\end{equation}
for $i=1,...,2N+1$.
 Here we have denoted $\xi_H=(\xi_1,...,\xi_{2n})$,  $\tilde A=(A,0)\in \R^{2n+1}$, and  
$|\xi|_\e^2= \sum_{i=1}^{2n+1} \xi_i^2 $.

Clearly  
 for a.e. $x\in\Omega, $ and for all $\xi^\e =  \sum_{i=1}^{2n} \xi_i X_i^\e + \e \xi_{2n+1} X_{2n+1}^\e$  one has uniformly on compact subsets of $\Om \times (0,T)$,
$$
(A_{1, \delta, \varepsilon} (x,\xi^\e),...,A_{2n+1, \delta, \varepsilon} (x,\xi^\e))\ \underset{\varepsilon\to 0^+}{\longrightarrow}\    (A_{1, \delta}(x, \xi_1,...,\x_{2n}), ..., A_{2n, \delta} (x, \xi_1,...,\x_{2n}),0),
$$
where $A_\delta$ is defined as in \eqref{approx1}.
In addition one can see that there exist constants $\lambda, \Lambda>0$ depending on $\lambda^\prime, \Lambda^\prime$ such that
$$
\begin{cases}
\lambda (\ddelta+|\xi|_\e^2)^{\frac{p-2}{2}} |\eta|_\e^2 \le  \p_{\xi_j} A_{i, \delta, \e} (x,\xi) \eta_i \eta_j \le \Lambda (\ddelta+|\xi|_\e^2)^{\frac{p-2}{2}} |\eta|_\e^2, 
\\
|A_i (x,\xi)| +  |\p_{x_j} A_{i, \delta,\e} (x,\xi)| \le  \Lambda (\ddelta+|\xi|_\e^2)^{\frac{p-1}{2}},
\end{cases}
$$
 for all $\eta =\sum_{i=1}^{2n+1} \eta_i X_i^\e\in \R^{2n+1}$.
\end{proof}

\begin{rmrk}\label{notation}
For the rest of the paper we will always consider solutions $u_\delta$ of the Dirichlet problem \eqref{maineq} with $\delta>0$,  in a cylinder $ D\times (\tau_1,\tau_2)\subset \subset Q_1$, with $D\subset \subset \Omega_1$ and $[\tau_1,\tau_2]\subset (t_1,t_2)$. For the sake of notation we will drop the subscript $\delta$ from $u_\delta$ and $A_{i,\delta}$, and with a slight abuse of notation write  $Q=\Omega \times (0,T)$ instead of $D\times (\tau_1,\tau_2)$. To further simplify the formulation of the estimates, we will assume that $A_i(x,\xi)= A_i(\xi)$, as in this case we can obtain sharper constants, and so we highlight these more involved aspects of the proofs. The more general case is handled in a similar fashion, and does not lead to explicit constants on the right hand side of the estimates.  We note explicitly that all constants are independent of the parameter $\delta>0$.
\end{rmrk}

\section{Preliminary energy estimates}\label{old energy estimates}
We recall the basic Caccioppoli inequalities proved in \cite{CCG}. These inequalities apply to a smooth solution $u$ of the approximating equation \eqref{maineq} with $\delta>0$,  in a cylinder $ Q\subset \subset Q_1$. In what follows we will implicitly assume that all constants on the right hand side of the inequalities depend on $n, p$, on the structure constants, $\lambda, \Lambda$ {\it but not on $\delta$.} 

\begin{lemma} \label{diff-PDE} Let $u$ be a solution of \eqref{maineq} in $Q$, with $\delta>0$. If we set $v _l = X _lu$, with $ l = 1, 2, . . . , 2n$, and $s_l = (-1)^{[l/n]}$ then the function $v _l$ is  a  solution of
\begin{equation}\label{eqderivX}
\p_t v _l
= \sum_{i,j=1}^{2n} X _i\Big( A_{i, \xi_j}  (\nabla_0u) X _l X _ju \Big) + s_l Z (A_{l+s_ln}  (\nabla_0u)) .\end{equation}

\end{lemma}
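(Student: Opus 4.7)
The plan is to differentiate the PDE \eqref{maineq} along the horizontal direction $X_l$, using the smoothness of $u$ (guaranteed by Theorem \ref{CCG} for $\delta>0$) to justify commuting derivatives. Since $X_l$ acts only on spatial variables while $\partial_t$ is a time derivative, they commute, so $X_l \partial_t u = \partial_t(X_l u) = \partial_t v_l$. On the right-hand side, $X_l$ must be commuted past each outer $X_i$ and then the chain rule applied to $A_i(\nabla_0 u)$ (recalling that by Remark \ref{notation} we have reduced to $A_i(x,\xi)=A_i(\xi)$).

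The essential algebraic input is the Heisenberg commutator structure: among $X_1,\ldots,X_{2n}$, the only nonzero brackets are $[X_i,X_{i+n}]=Z$ for $i=1,\ldots,n$. With the convention $s_l=(-1)^{[l/n]}$ (so $s_l=1$ for $l\le n$ and $s_l=-1$ for $l>n$) and partner index $l+s_l n$, these relations can be written uniformly as
\begin{equation*}
[X_l, X_i] = \begin{cases} s_l Z & \text{if } i = l+s_l n,\\ 0 & \text{otherwise.}\end{cases}
\end{equation*}
Therefore, for each $i$,
\begin{equation*}
X_l X_i A_i(\nabla_0 u) = X_i X_l A_i(\nabla_0 u) + \delta_{i,\,l+s_l n}\, s_l\, Z A_i(\nabla_0 u).
\end{equation*}

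Summing over $i$ and applying the chain rule $X_l A_i(\nabla_0 u) = \sum_j A_{i,\xi_j}(\nabla_0 u)\, X_l X_j u$ to the first term gives
\begin{equation*}
X_l\sum_{i=1}^{2n} X_i A_i(\nabla_0 u) = \sum_{i,j=1}^{2n} X_i\bigl(A_{i,\xi_j}(\nabla_0 u)\, X_l X_j u\bigr) + s_l\, Z\bigl(A_{l+s_l n}(\nabla_0 u)\bigr),
\end{equation*}
which, combined with $\partial_t v_l = X_l \partial_t u$, yields \eqref{eqderivX}. There is no real obstacle here — the proof is essentially bookkeeping of the commutators — but the one point requiring care is verifying the sign and index $l+s_l n$ for both $l\le n$ and $l>n$ so that the single commutator term is correctly identified and no contribution is double-counted; all smoothness needed to exchange $X_l$ with $X_i$ and with $\partial_t$ is supplied by Theorem \ref{CCG}.
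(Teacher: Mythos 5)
Your proof is correct and is exactly the standard argument: the paper itself states this lemma without proof (recalling it from \cite{CCG}), and the computation there is the same differentiation of the equation along $X_l$, using smoothness for $\delta>0$, the chain rule, and the single commutator $[X_l,X_{l+s_l n}]=s_l Z$ to produce the extra term. Your bookkeeping of the sign and the partner index $l+s_l n$ in both cases $l\le n$ and $l>n$ is accurate, so there is nothing to add.
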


\begin{lemma}\label{eqZ} Let $u$ be a solution of \eqref{maineq} in  $Q$, with $\delta>0$. The function
$Zu$  is  then a  solution of the equation
$$\partial_t Zu=
\sum_{i,j=1}^{2n}
X _i(A _{i, \xi_j}(\nabla_0u)X _j Zu).
$$
\end{lemma}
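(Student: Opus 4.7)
The plan is to simply differentiate equation \eqref{maineq} with respect to the central vector field $Z = \partial_z$, exploiting the commutation relations in $\Hn$. Recall that in the Heisenberg group the only non-trivial brackets among $X_1,\dots,X_{2n},Z$ are $[X_i,X_{n+i}] = Z$, while $[X_i,Z] = 0$ for every $i=1,\dots,2n$. In particular, $Z$ commutes with every horizontal vector field $X_i$ and, trivially, with $\partial_t$. This makes the calculation essentially formal once one grants the smoothness of $u$ afforded by Theorem \ref{CCG}.

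First I would apply $Z$ to both sides of \eqref{maineq}. On the left, $Z\partial_t u = \partial_t Zu$. On the right, since $[Z,X_i]=0$, we can commute $Z$ past the outer derivative:
\begin{equation*}
Z\sum_{i=1}^{2n} X_i A_i(\nabla_0 u) \;=\; \sum_{i=1}^{2n} X_i \bigl(Z\,A_i(\nabla_0 u)\bigr).
\end{equation*}
Here I am using Remark \ref{notation} to reduce to the case $A_i(x,\xi) = A_i(\xi)$; in the general $x$-dependent case one would pick up an additional lower order term $\sum_i X_i(\partial_{x_k}A_i\cdot \delta_{k,2n+1})$ which does not affect the statement once one reabsorbs it into the structure.

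Next, by the chain rule applied to $A_i(\nabla_0 u) = A_i(X_1 u,\dots,X_{2n}u)$ we get
\begin{equation*}
Z\,A_i(\nabla_0 u) \;=\; \sum_{j=1}^{2n} A_{i,\xi_j}(\nabla_0 u)\; Z X_j u \;=\; \sum_{j=1}^{2n} A_{i,\xi_j}(\nabla_0 u)\; X_j Z u,
\end{equation*}
where in the last step we used once more $[Z,X_j] = 0$. Substituting this back yields exactly the claimed equation
\begin{equation*}
\partial_t Zu \;=\; \sum_{i,j=1}^{2n} X_i\bigl(A_{i,\xi_j}(\nabla_0 u)\, X_j Zu\bigr).
\end{equation*}

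There is no real obstacle here: since $\delta>0$, Theorem \ref{CCG} guarantees that $u\in C^\infty$, so all derivatives exist classically and the above manipulations are rigorous. The only subtle point, compared with the analogous computation for $X_l u$ in Lemma \ref{diff-PDE}, is precisely that $Z$ commutes with the whole horizontal frame, which is why no commutator term of the form $s_l Z(A_{l+s_l n})$ appears on the right-hand side.
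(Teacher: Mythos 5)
Your proof is correct and is exactly the standard computation behind this lemma (which the paper simply recalls from \cite{CCG} without reproving): differentiate \eqref{maineq} along $Z$, use that $Z$ is central so $[Z,X_i]=0$ for all $i$, and apply the chain rule, all justified classically since $u\in C^\infty$ for $\delta>0$ by Theorem \ref{CCG}. Your closing observation correctly identifies why no commutator term appears here, in contrast with Lemma \ref{diff-PDE}.
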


First we recall a Caccioppoli estimates for derivatives of the solution along the center of the group.

\begin{lemma}[Lemma 3.4 \cite{CCG}]\label{stimaZu} Let $u$ be a solution of \eqref{maineq} in $Q$ with $\delta>0$.
For every $ \beta \geq 0$ and  non-negative $\eta\in C^{1}([0,T], C^\infty_0 (\Om))$ vanishing on the parabolic boundary of $Q$, one has
\begin{equation*}
\begin{aligned}
\int_{t_1} ^{t_2}\int_\Omega 
(\delta+|\nabla_0u|^2)^{\frac{p-2}{2}}|Zu|^{\beta} | \nabla_0Zu|^2\eta^{4+\beta}
\le & C
\int_{t_1} ^{t_2}\int_\Omega
(\delta+|\nabla_0u|^2)^{\frac{p-2}{2}}|Zu|^{\beta+2}|\nabla_0\eta|^2 \eta^{2+\beta}\\
&
+ C\int_{t_1}^{t_2}\int_\Omega |Zu|^{\beta+2} |\partial_t \eta |  \eta^{3+\beta},
\end{aligned}
\end{equation*}
where $C=C(\lambda, \Lambda)>0$.
\end{lemma}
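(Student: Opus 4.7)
The plan is to test the equation for $Zu$ in Lemma~\ref{eqZ} against $\phi = |Zu|^{\beta}(Zu)\,\eta^{4+\beta}$, which is an admissible test function because $u$ is smooth by Theorem~\ref{CCG} and $\eta$ vanishes on the parabolic boundary of $Q$. After integrating by parts in space, I expect to obtain the identity
\[
\int_{t_1}^{t_2}\!\!\int_\Omega \partial_t(Zu)\,|Zu|^\beta Zu\,\eta^{4+\beta}
 + \sum_{i,j=1}^{2n}\int_{t_1}^{t_2}\!\!\int_\Omega A_{i,\xi_j}(\nabla_0 u)\,X_j(Zu)\,X_i\phi = 0.
\]

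For the parabolic term, I would use that $\partial_t(Zu)\cdot|Zu|^\beta Zu = \tfrac{1}{\beta+2}\partial_t\bigl(|Zu|^{\beta+2}\bigr)$ and integrate by parts in $t$, exploiting the vanishing of $\eta$ at $t=t_1,t_2$, to produce exactly the term $\tfrac{4+\beta}{\beta+2}\int\!\int|Zu|^{\beta+2}\eta^{3+\beta}\,\partial_t\eta$ appearing on the right-hand side of the lemma. For the elliptic term, expanding via the product rule gives
\[
X_i\phi = (\beta+1)\,|Zu|^\beta X_i(Zu)\,\eta^{4+\beta} \;+\; (4+\beta)\,|Zu|^\beta (Zu)\,\eta^{3+\beta}\,X_i\eta,
\]
so that the "diagonal" contribution is bounded below, using the lower bound in \eqref{structure1}, by $(\beta+1)\lambda$ times the integral of $(\delta+|\nabla_0 u|^2)^{(p-2)/2}|Zu|^\beta|\nabla_0 Zu|^2\eta^{4+\beta}$, which is the coercive quantity we want. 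The "cross" contribution is estimated using the upper bound in \eqref{structure1} by $\Lambda(4+\beta)$ times $\int\!\int (\delta+|\nabla_0 u|^2)^{(p-2)/2}|\nabla_0 Zu|\,|Zu|^{\beta+1}\eta^{3+\beta}|\nabla_0\eta|$, to which I apply Young's inequality to split off a small multiple of the diagonal integrand and absorb it into the left-hand side.

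The only mildly delicate book-keeping is the balancing of the cutoff exponents: the choice $\eta^{4+\beta}$ in the test function is made so that after the product rule produces a factor $\eta^{3+\beta}|\nabla_0\eta|$, Young's inequality $ab \le \tfrac{\varepsilon}{2}a^2+\tfrac{1}{2\varepsilon}b^2$ yields $\eta^{2+\beta}|\nabla_0\eta|^2$ without dropping any power of $\eta$ off of the coercive $\eta^{4+\beta}$ integrand we are trying to control. I expect no serious obstacle: the only subtlety, which is cosmetic rather than substantial, is tracking the $\beta$-dependence of constants -- a clean choice of Young parameter (e.g.\ $\varepsilon = \lambda(\beta+1)/((4+\beta)\Lambda)$) and division by $\beta+1$ at the end give an estimate with a constant depending on $\lambda,\Lambda$ (and at worst polynomially on $\beta$, which can be absorbed in the stated $C$ for each fixed $\beta$).
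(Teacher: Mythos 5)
Your proposal is correct and is exactly the standard argument; note that this paper does not prove the lemma at all but imports it verbatim from \cite{CCG} (Lemma 3.4 there), where the proof is precisely what you describe: test the equation of Lemma \ref{eqZ} for $Zu$ with $\phi=|Zu|^{\beta}Zu\,\eta^{4+\beta}$, integrate by parts in $t$ and in space, use \eqref{structure1} for coercivity of the diagonal term, and absorb the cross term by Young's inequality. The one point to tighten is your closing remark that the constant may grow polynomially in $\beta$ and ``can be absorbed in the stated $C$ for each fixed $\beta$'': the lemma asserts $C=C(\lambda,\Lambda)$ \emph{uniformly} in $\beta$, and this uniformity is essential because the estimate is fed into Lemma \ref{mainlemma} and the Moser iteration with $\beta\to\infty$. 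Fortunately your own bookkeeping already delivers it --- with the Young parameter $\varepsilon=\lambda(\beta+1)/((4+\beta)\Lambda)$ and after dividing by $(\beta+1)\lambda/2$, the resulting coefficients are $(4+\beta)^2\Lambda^2/((\beta+1)^2\lambda^2)\le 16\Lambda^2/\lambda^2$ for the gradient term and $2(4+\beta)/(\lambda(\beta+1)(\beta+2))\le 4/\lambda$ for the time term, both bounded uniformly for $\beta\ge 0$ --- so you should simply state that rather than hedge.
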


Second, we recall a Caccioppoli estimate for the horizontal derivatives. 

\begin{lemma}[Lemma 3.5 \cite{CCG}] \label{lemma3.4} Let $u$ be a weak solution of \eqref{maineq} in $Q$, with $\delta>0$. For every  $\beta \geq 0 $ and non-negative $\eta\in C^{1}([0,T], C^\infty_0 (\Om))$ vanishing on the parabolic boundary of $Q$,
we have
\begin{equation*}
\begin{aligned}
&\frac{1}{\beta+2}\sup_{t_1<t<t_2}\int_\Omega (\delta+ |\nabla_0u|^2)^{\frac{\beta+2}{2}}\eta^2 +\int_{t_1} ^{t_2} \int_\Omega
(\delta + |\nabla_0u|^2)^{(p-2+\beta)/2} |\nabla_0^2u|^2 \eta^2\\
\leq & \, C
\int_{t_1} ^{t_2} \int_\Omega
(\delta + |\nabla_0u|^2)^{\frac{p+\beta}{2}} ( |\nabla_0\eta|^2 + |Z\eta| \eta)
+ 
\frac{C}{\beta+2}
\int_{t_1} ^{t_2} \int_\Omega 
(\delta + |\nabla_0u|^2)^{\frac{\beta+2}{2}}|\partial_t \eta| \eta \\
& +
C(\beta + 1)^4
\int_{t_1} ^{t_2} \int_\Omega
(
\delta + |\nabla_0u|^2)^{\frac{p-2+\beta}{
2}} |Zu|^2 \eta^2,
\end{aligned}
\end{equation*}
where $C = C(n, p, \lambda, \Lambda) > 0$, independent of $\delta$.
\end{lemma}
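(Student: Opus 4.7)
The inequality is the standard Caccioppoli-type estimate for the gradient, adapted to the sub-Riemannian setting, and the plan is to differentiate \eqref{maineq} horizontally, test the resulting equation for $v_l := X_l u$ against a weighted copy of $v_l$ itself, sum over $l$, and absorb the error terms produced by the non-commutativity of the frame. Concretely, I would invoke Lemma \ref{diff-PDE} and test \eqref{eqderivX} against the time-truncated function $\varphi_l = v_l \, (\delta+|\nabla_0 u|^2)^{\beta/2}\, \eta^2 \, \mathbf{1}_{[t_1,t]}$, sum over $l=1,\dots,2n$, and take the supremum in $t$ at the end.

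The time-derivative term, after integration by parts in $t$ and the vanishing of $\eta$ on $\partial_p Q$, produces
\begin{equation*}
\tfrac{1}{\beta+2}\int_\Omega (\delta+|\nabla_0u|^2)^{(\beta+2)/2}\eta^2(\cdot,t)\,dx + \tfrac{1}{\beta+2}\int\!\!\int(\delta+|\nabla_0u|^2)^{(\beta+2)/2}|\partial_t\eta|\eta,
\end{equation*}
which, after taking $\sup_t$, yields the first term on the left and the $|\partial_t\eta|\eta$ term on the right. The principal divergence term $\sum X_i(A_{i,\xi_j}X_lX_ju)$ is then treated by integration by parts in $X_i$, expanding $X_i\varphi_l$ into three pieces. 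The piece where the derivative hits $v_l$ gives, via \eqref{structure1} and after commuting $X_iX_l=X_lX_i+[X_i,X_l]$, the principal positive contribution $\lambda\int\!\!\int(\delta+|\nabla_0u|^2)^{(p-2+\beta)/2}|\nabla_0^2u|^2\eta^2$ on the left, up to commutator remainders involving $Zu$ which I would absorb into the final $(\beta+1)^4|Zu|^2$ term on the right. The piece where the derivative hits the weight $(\delta+|\nabla_0u|^2)^{\beta/2}$ is likewise sign-consistent with the main term (giving the $\beta$-dependent improvement). The piece where the derivative lands on $\eta^2$ produces a cross factor $\eta X_i\eta$ which, by Young's inequality, is absorbed against the main quadratic form at the cost of the $(\delta+|\nabla_0u|^2)^{(p+\beta)/2}|\nabla_0\eta|^2$ term on the right.

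The genuinely sub-Riemannian contribution comes from $s_l Z(A_{l+s_ln}(\nabla_0 u))$ in \eqref{eqderivX}. I would integrate by parts in $Z$; since $Z$ is central one has $Zv_l=X_lZu$, hence
\begin{equation*}
Z\varphi_l = X_lZu\,(\delta+|\nabla_0u|^2)^{\beta/2}\eta^2 + v_l\, Z[(\delta+|\nabla_0u|^2)^{\beta/2}]\,\eta^2 + v_l(\delta+|\nabla_0u|^2)^{\beta/2}\,Z\eta^2.
\end{equation*}
The last summand delivers the $|Z\eta|\eta$ contribution. For the first two, pairing $|A|\le\Lambda(\delta+|\nabla_0u|^2)^{(p-1)/2}$ with Young's inequality, used first to peel off $|\nabla_0 Zu|$ (which then gets absorbed either into the principal $|\nabla_0^2 u|^2$ term or controlled through Lemma \ref{stimaZu}) and a second time to rebalance the weights by powers of $\beta$, yields the source term $C(\beta+1)^4\int\!\!\int(\delta+|\nabla_0u|^2)^{(p-2+\beta)/2}|Zu|^2\eta^2$ on the right, the quartic dependence coming from the product of the two resulting weights.

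The main obstacle is precisely this bookkeeping: because $[X_i,X_{n+i}]=Z$, every integration by parts in the horizontal directions spawns commutator remainders involving $Zu$ and $\nabla_0 Zu$, and the whole argument hinges on verifying that each such remainder can be reabsorbed either into the principal $|\nabla_0^2 u|^2$ term on the left or into the $(\beta+1)^4|Zu|^2$ source on the right. The Young's inequality coefficients must be chosen $\delta$-independent so as to keep all constants independent of $\delta$, which is possible thanks to the uniform structure \eqref{structure1}.
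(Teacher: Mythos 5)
First, note that the paper does not prove this lemma at all: it is quoted verbatim from \cite{CCG} (Lemma 3.5 there), so the comparison is against the proof in that reference. Your overall skeleton is indeed the one used there: differentiate the equation via Lemma \ref{diff-PDE}, test \eqref{eqderivX} with $\varphi_l=X_lu\,(\delta+|\nabla_0u|^2)^{\beta/2}\eta^2$, sum over $l$, and your accounting of the time term, of the principal divergence term (including the commutator remainders $[X_i,X_l]u=\pm Zu$ absorbed by Young into the $|\nabla_0^2u|^2$ term on the left and the $|Zu|^2$ term on the right), and of the $|Z\eta|\eta$ contribution is correct.

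The genuine gap is in your treatment of the central term $s_lZ(A_{l+s_ln}(\nabla_0u))$. After you integrate by parts in $Z$, the dangerous pieces are $\int A_{l+s_ln}\,X_lZu\,(\delta+|\nabla_0u|^2)^{\beta/2}\eta^2$ and the analogous piece where $Z$ hits the weight; both contain the \emph{third-order} quantity $\nabla_0Zu$. Your proposal to ``peel off $|\nabla_0Zu|$'' by Young and absorb it ``into the principal $|\nabla_0^2u|^2$ term'' cannot work: $|\nabla_0^2u|^2$ controls only second-order horizontal derivatives, and any Young splitting of $(\delta+|\nabla_0u|^2)^{(p-1+\beta)/2}|\nabla_0Zu|\eta^2$ leaves either a term $\int(\delta+|\nabla_0u|^2)^{(p-2+\beta)/2}|\nabla_0Zu|^2\eta^2$, which is controlled by nothing on the left-hand side, or a term $\int(\delta+|\nabla_0u|^2)^{(p+\beta)/2}\eta^2$ with no $|\nabla_0\eta|^2$ or $|Z\eta|\eta$ factor, which is not among the admissible right-hand side terms. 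Falling back on Lemma \ref{stimaZu} does not close the argument either: its left-hand side carries the weight $(\delta+|\nabla_0u|^2)^{(p-2)/2}|Zu|^{\beta}$ rather than $(\delta+|\nabla_0u|^2)^{(p-2+\beta)/2}$, and its right-hand side imports terms in $|Zu|^{\beta+2}|\nabla_0\eta|^2$ and $|Zu|^{\beta+2}|\partial_t\eta|$ that do not appear in the conclusion of the present lemma. The missing idea is a \emph{second} integration by parts in the horizontal direction, moving $X_l$ off of $Zu$, so that one is left with $\int Zu\,X_l\bigl(A_{l+s_ln}(\delta+|\nabla_0u|^2)^{\beta/2}\eta^2\bigr)$; this produces only $Zu$ (never $\nabla_0Zu$) multiplied by $(\delta+|\nabla_0u|^2)^{(p-2+\beta)/2}|\nabla_0^2u|\eta^2$ and by $(\delta+|\nabla_0u|^2)^{(p-1+\beta)/2}\eta|\nabla_0\eta|$, and these are exactly the products that Young's inequality converts into the stated $C(\beta+1)^4\int(\delta+|\nabla_0u|^2)^{(p-2+\beta)/2}|Zu|^2\eta^2$ source together with pieces absorbable into the left-hand side and into the $|\nabla_0\eta|^2$ term. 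Without this step the argument does not produce the claimed inequality.
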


%
%
%
%
%
%

\section{Main Caccioppoli inequality}\label{new energy estimates}

The main result of this section is a Caccioppoli inequality, Proposition \ref{cor1}, for the horizontal derivatives of the weak solutions of \eqref{maineq}, with $\delta>0$. To do this,  we first need to prove an  estimate for  the derivative along the center $Zu$ in Lemma \ref{SobXU} and Lemma \ref{mainlemma}.  All estimates are  uniform in $\delta>0$, and the constants are stable as $\delta\to 0$.

We begin by recalling a Poincar\'e-like interpolation inequality from \cite{CLM}. In the proof, we will need the restriction $2\le p\le 4$ and this is the only use we make of this hypothesis in the paper.

\begin{lemma}\label{SobXU} Assume that $2\le p \leq 4$ and let $u\in C^2(Q)$.
There exists a constant $C>0$ depending only on $n, p$ such that for every $ \beta \geq 0$ and non-negative $\eta\in C^{1}([0,T], C^\infty_0 (\Om))$ vanishing on the parabolic boundary of $Q$, we have
\begin{equation}\label{interpolation}
\begin{aligned}
\int_{t_1} ^{t_2}\int_\Omega &
|Zu|^{p+\beta} \eta^{p+\beta} 
\le  C(p+\beta) ||\nabla_0\eta||_{L^\infty}\int\int_{spt(\eta)} (\delta+ |\nabla_0u|^2)^{\frac{p+\beta}{2}}\\
&  + 
C(p+\beta)\int_{t_1} ^{t_2}\int_\Omega 
(\delta + |\nabla_0u|^2)^{(p-2)/2}
|Zu|^{\beta} | \nabla_0Zu|^2\eta^{4+\beta}
\end{aligned}
\end{equation}

\end{lemma}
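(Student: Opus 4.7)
The plan is to exploit the Heisenberg commutator identity $[X_i,X_{n+i}] = Z$ (for $i=1,\dots,n$), which lets us rewrite $Zu = \frac{1}{n}\sum_{i=1}^{n}(X_iX_{n+i}u - X_{n+i}X_iu)$ as a combination of second-order horizontal derivatives. Substituting this for one factor of $Zu$ in $I := \int_{t_1}^{t_2}\!\int_\Omega |Zu|^{p+\beta}\eta^{p+\beta}$ and integrating by parts once on each resulting term (moving the outer $X_j$ off $u$ and onto $|Zu|^{p+\beta-2}Zu\,\eta^{p+\beta}$) produces two families of contributions: a family $\mathcal A$ in which the derivative lands on $|Zu|^{p+\beta-2}Zu$, bringing down $(p+\beta-1)|Zu|^{p+\beta-2}\nabla_0 Zu$, and a family $\mathcal B$ in which it lands on $\eta^{p+\beta}$, bringing down $(p+\beta)\eta^{p+\beta-1}\nabla_0\eta$. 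In both families the surviving horizontal derivative of $u$ produces a factor of $\nabla_0 u$, so schematically
\[
I \le C(p+\beta)\!\int\! |Zu|^{p+\beta-2}|\nabla_0 u||\nabla_0 Zu|\,\eta^{p+\beta} + C(p+\beta)\!\int\! |Zu|^{p+\beta-1}|\nabla_0 u||\nabla_0\eta|\,\eta^{p+\beta-1}.
\]

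To bound the $\mathcal A$ contribution I plan to apply a three-factor Young inequality, writing the integrand as $M_1 M_2 M_3$ with
\[
M_1 := (\delta+|\nabla_0 u|^2)^{(p-2)/4}|Zu|^{\beta/2}|\nabla_0 Zu|\,\eta^{(4+\beta)/2},
\]
\[
M_2 := (\delta+|\nabla_0 u|^2)^{(4-p)/4}, \qquad M_3 := (|Zu|\eta)^{\,p+\beta/2-2},
\]
and conjugate exponents $r = 2$, $s = \tfrac{2(p+\beta)}{4-p}$, $t = \tfrac{p+\beta}{p+\beta/2-2}$. A short bookkeeping check gives $M_1 M_2 M_3 = (\delta+|\nabla_0 u|^2)^{1/2}|Zu|^{p+\beta-2}|\nabla_0 Zu|\,\eta^{p+\beta}$ and $\tfrac{1}{r}+\tfrac{1}{s}+\tfrac{1}{t}=1$, so Young bounds this integrand by $\tfrac12 M_1^2 + \tfrac{1}{s}M_2^s + \tfrac{1}{t}M_3^t$. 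In turn $M_1^2$ is the Caccioppoli-type integrand on the right of \eqref{interpolation}, $M_2^s = (\delta+|\nabla_0 u|^2)^{(p+\beta)/2}$ is the density appearing in the first term on the right, and $M_3^t = |Zu|^{p+\beta}\eta^{p+\beta}$ is absorbable into $I$ (one checks $1/t\le 1/2$ throughout the range). The $\mathcal B$ contribution is treated by pulling $\|\nabla_0\eta\|_{L^\infty}$ out of the integral and applying a standard two-factor Young inequality with conjugate pair $(\tfrac{p+\beta}{p+\beta-1},\,p+\beta)$ to $|Zu|^{p+\beta-1}|\nabla_0 u|\eta^{p+\beta-1}$; this again yields a small multiple of $I$ (absorbable) together with a term compatible with $\|\nabla_0\eta\|_{L^\infty}\!\int\!\int_{\mathrm{spt}(\eta)}(\delta+|\nabla_0 u|^2)^{(p+\beta)/2}$.

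The main obstacle, and the reason for the restriction $p \le 4$, is the viability of the three-factor Young step: the exponent $s = 2(p+\beta)/(4-p)$ diverges as $p\uparrow 4$, so the endpoint $p=4$ must be recovered by a limiting argument and the method genuinely breaks down for $p>4$. The lower endpoint $p=2$ is painless since then $1/t \to 0$ and the absorption term simply drops out. This is consistent with the authors' remark that Lemma \ref{SobXU} is the single place in the paper where the bound $p\le 4$ is actually invoked.
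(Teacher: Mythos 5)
Your argument is, in substance, the paper's own: the same commutator identity $Zu=X_lX_{n+l}u-X_{n+l}X_lu$, the same single integration by parts producing the two families you call $\mathcal A$ and $\mathcal B$, and the restriction $p\le 4$ enters through exactly the same exponent $\tfrac{2(p+\beta)}{4-p}$ --- the paper merely phrases the computation as a three-factor H\"older inequality at the level of integrals rather than a pointwise three-factor Young inequality. Two corrections, one cosmetic and one substantive. Cosmetic: the endpoint $p=4$ needs no limiting argument, since there $M_2\equiv 1$ drops out and you are left with a two-factor estimate with exponents $(2,2)$. Substantive: your absorption step is not valid as written. The term you propose to reabsorb carries the overall prefactor from the integration by parts, so it equals $C(p+\beta)\cdot\tfrac1t\int M_3^t=C\,(p-2+\tfrac{\beta}{2})\,I$; its coefficient is not $\le\tfrac12$ but grows linearly in $\beta$, and the lemma must hold for the arbitrarily large $\beta$ fed into the Moser iteration. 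The fix is standard but must be stated: either use the weighted form of Young's inequality, choosing the weight on $M_3$ so small that the coefficient of $I$ is at most $\tfrac12$ (at the price of extra, but still polynomial and hence harmless, powers of $p+\beta$ on the $M_1^2$ and $M_2^s$ terms), or do as the paper does --- keep the three-factor H\"older at the integral level to obtain $L\le A\,L^{\theta_1}+B\,L^{\theta_2}$ with $\theta_1,\theta_2<1$ and then solve this algebraic inequality for $L$, which succeeds no matter how large $A$ and $B$ are. The same caveat applies to your two-factor Young treatment of $\mathcal B$.
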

\begin{proof}
We denote 
\begin{equation}\label{LR}
L=\int_{t_1} ^{t_2}\int_\Omega
|Zu|^{p+\beta} \eta^{p+\beta}, \quad R=\int\int_{spt(\eta)} (\delta+ |\nabla_0u|^2)^{\frac{p+\beta}{2}}.
\end{equation}
We estimate $L$ from above as follows. Fix $l=1,2,...,n$. Note that
\[
Zu=X_lX_{n+l}u-X_{n+l}X_lu.
\]
We can write 
\[ |Zu|^{p+\beta}=|Zu|^{p-2+\beta}Zu (X_lX_{n+l}u-X_{n+l}X_lu).
\]
Then integration by parts gives us
\begin{equation}\label{ML1}
\begin{aligned}
L=&\int_{t_1} ^{t_2}\int_\Omega |Zu|^{p-2+\beta}Zu (X_lX_{n+l}u-X_{n+l}X_lu)\eta^{p+\beta}\\
=&-(p-1+\beta)\int_{t_1} ^{t_2}\int_\Omega |Zu|^{p-2+\beta}(X_lZuX_{n+l}u-X_{n+l}ZuX_lu)\eta^{p+\beta}\\
&-(p+\beta)\int_{t_1} ^{t_2}\int_\Omega|Zu|^{p-2+\beta}Zu(X_{n+l}uX_l\eta-X_luX_{n+l}\eta)\eta^{p-1+\beta}\\
\le & \, 2(p+\beta)\int_{t_1} ^{t_2}\int_\Omega |\nabla_0u||Zu|^{p-2+\beta}|\nabla_0Zu|\eta^{p+\beta}\\
&+2(p+\beta)\int_{t_1} ^{t_2}\int_\Omega |\nabla_0u||Zu|^{p-1+\beta}|\nabla_0\eta|\eta^{p-1+\beta}=I_1+I_2
\end{aligned}
\end{equation}
We will estimate the integrals $I_1,I_2$ on the right hand side of \eqref{ML1} by H\"older's inequality. First for $I_2$, we have
\begin{equation}\label{ML2}
I_2\le 2(p+\beta) ||\nabla_0 \eta||_{L^\infty} R^{\frac{1}{p+\beta}}L^{\frac{p-1+\beta}{p+\beta}},
\end{equation}
where $L$ and $R$ are as in \eqref{LR}.

Second, for $I_1$, we have
\begin{equation}\label{ML3}
I_1\le 2(p+\beta) M^{\frac{1}{2}}R^{\frac{4-p}{2(p+\beta)}} L^{\frac{2p-4+\beta}{2(p+\beta)}},
\end{equation}
where 
\[
M=\int_{t_1} ^{t_2}\int_\Omega 
|\nabla_0u|^{p-2}|Zu|^{\beta} | \nabla_0Zu|^2\eta^{4+\beta}.
\]

This yields 
\begin{equation}\label{last-eq}
L\le C(p+\beta) ||\nabla_0 \eta||_{L^\infty} R^{\frac{1}{p+\beta}}L^{\frac{p-1+\beta}{p+\beta}}
+C (p+\beta) M^{\frac{1}{2}}R^{\frac{4-p}{2(p+\beta)}} L^{\frac{2p-4+\beta}{2(p+\beta)}},
\end{equation}
from which the conclusion follows immediately through Young's inequality.
\end{proof}

The previous Poincar\'e-like inequality can be applied to solutions of \eqref{maineq} and through invoking  Lemma \ref{stimaZu} lead us to the following key estimate.

\begin{lemma}\label{mainlemma}
Let $u$ be a solution of \eqref{maineq} in $Q$, with $\delta>0$ and $2\leq p \leq 4$. 
Then for every $ \beta \geq 0$ and non-negative $\eta\in C^{1}([0,T], C^\infty_0 (\Om))$ vanishing on the parabolic boundary of $Q$, we have
\begin{equation}\label{mainestimate}
\begin{aligned}
\Big(\int_{t_1} ^{t_2}\int_\Omega &
|Zu|^{p+\beta} \eta^{p+\beta} \Big)^{\frac{1}{p+\beta}}
\le  C(p+\beta) ||\nabla_0\eta||_{L^\infty}\Big(\int\int_{spt(\eta)} (\delta+ |\nabla_0u|^2)^{\frac{p+\beta}{2}}\Big)^{\frac{1}{p+\beta}}\\
&  + 
C(p+\beta)||\eta\partial_t\eta||_{L^\infty}^{\frac{1}{2}}|spt(\eta)|^{\frac{p-2}{2(p+\beta)}}\Big(\int\int_{spt(\eta)}  (\delta+ |\nabla_0u|^2)^{\frac{p+\beta}{2}} \Big)^{\frac{4-p}{2(p+\beta)}}
\end{aligned}
\end{equation}
\end{lemma}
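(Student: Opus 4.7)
The strategy is to combine the intermediate inequality \eqref{last-eq} obtained inside the proof of Lemma \ref{SobXU} with the Caccioppoli-type bound of Lemma \ref{stimaZu}. Keeping the notation $L$ and $R$ introduced in \eqref{LR}, and writing $M$ for the integral appearing on the left-hand side of Lemma \ref{stimaZu}, the monotonicity $|\nabla_0 u|^{p-2}\le(\delta+|\nabla_0 u|^2)^{(p-2)/2}$ (valid for $p\ge 2$) combined with \eqref{last-eq} gives
\[
L \le C(p+\beta)\|\nabla_0\eta\|_{L^\infty} R^{\frac{1}{p+\beta}} L^{\frac{p-1+\beta}{p+\beta}} + C(p+\beta)\, M^{\frac{1}{2}}\, R^{\frac{4-p}{2(p+\beta)}} L^{\frac{2p-4+\beta}{2(p+\beta)}}.
\]
It therefore suffices to bound $M^{1/2}$ in terms of $L$, $R$, $|spt(\eta)|$ and the weights $\|\nabla_0\eta\|_{L^\infty}$, $\|\eta\partial_t\eta\|_{L^\infty}^{1/2}$.

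\textbf{Bounding $M$.} I would apply Lemma \ref{stimaZu} and estimate each of its two right-hand integrals by H\"older's inequality with the conjugate exponents $\tfrac{p+\beta}{\beta+2}$ and $\tfrac{p+\beta}{p-2}$, chosen precisely so that $|Zu|^{\beta+2}$ is promoted to $|Zu|^{p+\beta}$ and $(\delta+|\nabla_0 u|^2)^{(p-2)/2}$ to $(\delta+|\nabla_0 u|^2)^{(p+\beta)/2}$. Factoring $|\nabla_0\eta|^2\le\|\nabla_0\eta\|_{L^\infty}^2$ out of the spatial integral and distributing the weight $\eta^{2+\beta}$ to the $|Zu|$-factor yields a bound of the form $\|\nabla_0\eta\|_{L^\infty}^2 R^{(p-2)/(p+\beta)} L^{(\beta+2)/(p+\beta)}$. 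For the time integral the decisive step is to split $|\partial_t\eta|\eta^{3+\beta}=(\eta|\partial_t\eta|)\eta^{2+\beta}\le\|\eta\partial_t\eta\|_{L^\infty}\eta^{2+\beta}$ before applying H\"older, which produces a bound of the form $\|\eta\partial_t\eta\|_{L^\infty}|spt(\eta)|^{(p-2)/(p+\beta)} L^{(\beta+2)/(p+\beta)}$. Taking square roots via $\sqrt{a+b}\le\sqrt{a}+\sqrt{b}$ gives
\[
M^{\frac{1}{2}}\le C\big(\|\nabla_0\eta\|_{L^\infty} R^{\frac{p-2}{2(p+\beta)}}+\|\eta\partial_t\eta\|_{L^\infty}^{\frac{1}{2}} |spt(\eta)|^{\frac{p-2}{2(p+\beta)}}\big) L^{\frac{\beta+2}{2(p+\beta)}}.
\]

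\textbf{Closing the argument.} Substituting this bound back into the displayed form of \eqref{last-eq} and tracking the exponents carefully, the $R$-exponents collapse to $\tfrac{1}{p+\beta}$ in the gradient term and to $\tfrac{4-p}{2(p+\beta)}$ in the time-derivative term, while both resulting contributions share the common factor $L^{(p-1+\beta)/(p+\beta)}$. Since $u$ is smooth and $\delta>0$, the quantity $L$ is finite, so dividing through by this common factor produces \eqref{mainestimate} at once. The only delicate point in the whole argument is the splitting $\eta^{3+\beta}=\eta\cdot\eta^{2+\beta}$ in the time integral: this is exactly what manufactures the joint supremum $\|\eta\partial_t\eta\|_{L^\infty}^{1/2}$ rather than the cruder product $\|\eta\|_{L^\infty}\|\partial_t\eta\|_{L^\infty}$, and simultaneously fixes the power of $|spt(\eta)|$ appearing in \eqref{mainestimate}; any other distribution of the powers of $\eta$ between the two H\"older factors would yield a genuinely different, weaker estimate.
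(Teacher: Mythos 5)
Your argument is correct and coincides with the paper's own proof: both bound $M$ via Lemma \ref{stimaZu} followed by H\"older with exponents $\tfrac{p+\beta}{\beta+2}$, $\tfrac{p+\beta}{p-2}$ (including the splitting $|\partial_t\eta|\eta^{3+\beta}\le\|\eta\partial_t\eta\|_{L^\infty}\eta^{2+\beta}$), substitute $M^{1/2}$ back into \eqref{last-eq}, and cancel the common factor $L^{(p-1+\beta)/(p+\beta)}$, which is legitimate since $u$ is smooth for $\delta>0$ so $L<\infty$. No discrepancies worth noting.
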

\begin{rmrk}\label{limit-remark}  Suspending temporarily the notation established in Remark \ref{notation}, we denote by $u_\delta$ the solutions of the approximating equation \eqref{maineq}.  In particular, Lemma \ref{mainlemma} establishes the local $L^q$  integrability of  $Z u_\delta$, the derivative along the center of the approximating solutions, with uniform $L^q$ bounds as $\delta\to 0$, for all $1\le q<\infty$. This implies that  one can find a subsequence, $Z u_{\delta_k}$  converging to a $L^q_{\loc}$ function, which in view of the definition of weak derivative, is also a derivative along the center of the uniform limit of the $u_\delta$. Since such limit is the original solution of \eqref{maineq-zero}, this proves the local  integrability of $Zu$ in every $L^q$ class as stated in Theorem \ref{main1}.
\end{rmrk}

\begin{proof}
We apply the inequality \eqref{last-eq} in the previous lemma to the solution $u$ and invoke Lemma \ref{stimaZu} to estimate the integral
\[
M=\int_{t_1} ^{t_2}\int_\Omega 
|\nabla_0u|^{p-2}|Zu|^{\beta} | \nabla_0Zu|^2\eta^{4+\beta},
\]
  obtaining
\begin{equation}\label{ML4}
\begin{aligned}
M &\le C
\int_{t_1} ^{t_2}\int_\Omega
(\delta+|\nabla_0u|^2)^{\frac{p-2}{2}}|Zu|^{\beta+2}|\nabla_0\eta|^2 \eta^{2+\beta}
+ C\int_{t_1}^{t_2}\int_\Omega |Zu|^{\beta+2} |\partial_t \eta |  \eta^{3+\beta}\\
&\le C ||\nabla_0 \eta||_{L^\infty}^2 R^{\frac{p-2}{p+\beta}}L^{\frac{\beta+2}{p+\beta}}
+C||\eta\partial_t\eta||_{L^\infty}|spt(\eta)|^{\frac{p-2}{p+\beta}}L^{\frac{\beta+2}{p+\beta}},
\end{aligned}
\end{equation}
where $C=C(\lambda, \Lambda)>0$, and $L$ is as in \eqref{ML1}. In the second inequality of \eqref{ML4}, we used H\"older's inequality.
Combining \eqref{ML3}, \eqref{last-eq} and \eqref{ML4}, we obtain the estimate for $I_1$,
\begin{equation}\label{ML5}
I_1\le C(p+\beta) ||\nabla_0 \eta||_{L^\infty} R^{\frac{1}{p+\beta}}L^{\frac{p-1+\beta}{p+\beta}}
+C (p+\beta) ||\eta\partial_t\eta||_{L^\infty}^{\frac 1 2}|spt(\eta)|^{\frac{p-2}{2(p+\beta)}} R^{\frac{4-p}{2(p+\beta)}}
L^{\frac{p-1+\beta}{2(p+\beta)}}.
\end{equation}
Next, we substitute the latter in  the estimate \eqref{ML3} for $I_1$ and the estimate \eqref{ML2} for $I_2$ to \eqref{ML1}, and conclude
\[ 
L\le C(p+\beta) ||\nabla_0 \eta||_{L^\infty} R^{\frac{1}{p+\beta}}L^{\frac{p-1+\beta}{p+\beta}}
+C (p+\beta) ||\eta\partial_t\eta||_{L^\infty}^{\frac 1 2}|spt(\eta)|^{\frac{p-2}{2(p+\beta)}} R^{\frac{4-p}{2(p+\beta)}}
L^{\frac{p-1+\beta}{2(p+\beta)}},
\]
which yields immediately  \eqref{mainestimate}. \end{proof}

The following result follows from  Lemma \ref{mainlemma}, and the energy estimate in Lemma \ref{lemma3.4}. It yields a Caccippoli inequality for the horizontal derivatives of weak solutions, which extends to the subRiemannian setting the analogue  Euclidean estimate   proved in \cite[Proposition 3.2 (3.7), page 225]{DB}.

\begin{prop}\label{cor1}
Let $u$ be a weak solution of \eqref{maineq} in $Q$, with $\delta>0$ and $2\leq p \leq 4$. 
Then for every $ \beta \geq 0$ and non-negative $\eta\in C^{1}([0,T], C^\infty_0 (\Om))$ vanishing on the parabolic boundary of $Q$, we have
\begin{equation}\label{maincacci}
\begin{aligned}
&\sup_{t_1<t<t_2}\int_\Omega (\delta+ |\nabla_0u|^2)^{\frac{\beta+2}{2}}\eta^2 +\int_{t_1} ^{t_2} \int_\Omega
(\delta + |\nabla_0u|^2)^{(p-2+\beta)/2} |\nabla_0^2u|^2 \eta^2\\
\leq & C(p+\beta)^7\big( ||\nabla_0\eta||_{L^\infty}^2+||\eta Z\eta||_{L^\infty}\big)\int\int_{spt(\eta)} (\delta+ |\nabla_0u|^2)^{\frac{p+\beta}{2}}\\
& + 
C(p+\beta)^7||\eta\partial_t\eta||_{L^\infty}|spt(\eta)|^{\frac{p-2}{p+\beta}}\Big(\int\int_{spt(\eta)}  (\delta+ |\nabla_0u|^2)^{\frac{p+\beta}{2}} \Big)^{\frac{\beta+2}{p+\beta}},
\end{aligned}
\end{equation}
where $C=C(n,p,\lambda,\Lambda)>0$.
\end{prop}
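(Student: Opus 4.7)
The proof will start from the Caccioppoli estimate in Lemma \ref{lemma3.4}, which already produces the desired left-hand side but carries two problematic terms on its right-hand side: the ``bad'' term $C(\beta+1)^{4}\int\!\!\int(\delta+|\nabla_0 u|^{2})^{(p-2+\beta)/2}|Zu|^{2}\eta^{2}$ involving the derivative along the center, and the time-derivative term $\tfrac{C}{\beta+2}\int\!\!\int(\delta+|\nabla_0 u|^{2})^{(\beta+2)/2}|\partial_{t}\eta|\,\eta$. The entire task is to dominate these two terms by the quantities appearing on the right-hand side of \eqref{maincacci}, which I abbreviate as $R:=\int\!\!\int_{spt(\eta)}(\delta+|\nabla_0 u|^{2})^{(p+\beta)/2}$ and its fractional power $R^{(\beta+2)/(p+\beta)}$. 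The main input will be Lemma \ref{mainlemma}, which bounds the $L^{p+\beta}$ norm of $Zu\,\eta$ in exactly those terms.

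For the $|Zu|^{2}$ term I would apply H\"older's inequality with conjugate exponents $(p+\beta)/(p-2+\beta)$ and $(p+\beta)/2$, rewriting $(\delta+|\nabla_0 u|^{2})^{(p-2+\beta)/2}$ as the $(p-2+\beta)/(p+\beta)$-th power of $(\delta+|\nabla_0 u|^{2})^{(p+\beta)/2}$, and placing all of $\eta^{2}$ on the $|Zu|^{2}$ factor. This produces
\[
\int\!\!\int(\delta+|\nabla_0 u|^{2})^{(p-2+\beta)/2}|Zu|^{2}\eta^{2}\le R^{(p-2+\beta)/(p+\beta)}\left(\int\!\!\int|Zu|^{p+\beta}\eta^{p+\beta}\right)^{2/(p+\beta)},
\]
which is precisely calibrated so that the $\eta$-weight on the right is $\eta^{p+\beta}$, matching the hypothesis of Lemma \ref{mainlemma}. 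Squaring \eqref{mainestimate} (using $(a+b)^{2}\le 2a^{2}+2b^{2}$) and substituting yields two contributions: the first carries the $R$-power $R^{2/(p+\beta)}\cdot R^{(p-2+\beta)/(p+\beta)}=R$, and the second carries $R^{(4-p)/(p+\beta)}\cdot R^{(p-2+\beta)/(p+\beta)}=R^{(\beta+2)/(p+\beta)}$, exactly the two powers prescribed in \eqref{maincacci}. The coefficient $(\beta+1)^{4}$ from Lemma \ref{lemma3.4} combines with the $(p+\beta)^{2}$ coming from squaring \eqref{mainestimate} to give at most $C(p+\beta)^{6}\le C(p+\beta)^{7}$, as claimed.

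The time-derivative term is handled separately: when $p>2$, a further H\"older step with exponents $(p+\beta)/(\beta+2)$ and $(p+\beta)/(p-2)$ (controlling the second factor by $\|\eta\partial_{t}\eta\|_{L^{\infty}}$ on $spt(\eta)$) gives $\|\eta\partial_{t}\eta\|_{L^{\infty}}|spt(\eta)|^{(p-2)/(p+\beta)}R^{(\beta+2)/(p+\beta)}$, while for $p=2$ the exponents $(\beta+2)/2$ and $(p+\beta)/2$ coincide and the estimate is immediate. The remaining term from Lemma \ref{lemma3.4}, namely $\int\!\!\int(\delta+|\nabla_0 u|^{2})^{(p+\beta)/2}(|\nabla_0\eta|^{2}+|Z\eta|\,\eta)$, is already of the desired form and is bounded directly by $(\|\nabla_0\eta\|_{L^{\infty}}^{2}+\|\eta Z\eta\|_{L^{\infty}})R$. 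Summing the three contributions yields \eqref{maincacci}. The whole argument amounts to careful bookkeeping of H\"older exponents once Lemma \ref{mainlemma} is in hand; the only genuinely delicate ingredient—the restriction $2\le p\le 4$—has already been absorbed into Lemma \ref{mainlemma} (and ultimately into the Poincar\'e-type estimate of Lemma \ref{SobXU}), so no additional obstacle arises at this stage.
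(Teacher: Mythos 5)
Your proposal is correct and follows essentially the same route as the paper: start from Lemma \ref{lemma3.4}, bound the $|Zu|^{2}$ term via H\"older with exponents $\frac{p+\beta}{p-2+\beta}$ and $\frac{p+\beta}{2}$ followed by the square of Lemma \ref{mainlemma}, and handle the $|\partial_t\eta|$ term by the same H\"older split. The only (harmless) discrepancy is in the power count: clearing the $\frac{1}{\beta+2}$ factor in front of the sup term in Lemma \ref{lemma3.4} contributes one more power of $(p+\beta)$ than you record, but the stated bound $C(p+\beta)^{7}$ still absorbs it.
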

\begin{rmrk} Although the statement addresses the approximate solution $u_\delta$, in view of   arguments analogue to those in  Remark \ref{limit-remark}, the same estimate holds for weak solutions of \eqref{maineq-zero}.
\end{rmrk}
\begin{proof}
Lemma \ref{lemma3.4} gives us the following estimate for the left hand side of \eqref{maincacci}
\begin{equation}\label{NH1}
\begin{aligned}
&\sup_{t_1<t<t_2}\int_\Omega (\delta+ |\nabla_0u|^2)^{\frac{\beta+2}{2}}\eta^2 +\int_{t_1} ^{t_2} \int_\Omega
(\delta + |\nabla_0u|^2)^{(p-2+\beta)/2} |\nabla_0^2u|^2 \eta^2\\
\leq & \, C (p+\beta)
\int_{t_1} ^{t_2} \int_\Omega
(\delta + |\nabla_0u|^2)^{\frac{p+\beta}{2}} ( |\nabla_0\eta|^2 + |Z\eta| \eta)
+ 
C
\int_{t_1} ^{t_2} \int_\Omega 
(\delta + |\nabla_0u|^2)^{\frac{\beta+2}{2}}|\partial_t \eta| \eta \\
& +
C(p+\beta )^5
\int_{t_1} ^{t_2} \int_\Omega
(
\delta + |\nabla_0u|^2)^{\frac{p-2+\beta}{
2}} |Zu|^2 \eta^2.
\end{aligned}
\end{equation}

To obtain the desired estimate \eqref{NH1}, we will show that each integral on the right hand side of \eqref{NH1} can be bounded from above by the right hand side of \eqref{maincacci}. For the first integral on the right hand of \eqref{NH1}, it is obviously bounded from above by the first item on the right hand side of \eqref{maincacci}. For the second integral, H\"older's inequality gives us
\[
\int_{t_1} ^{t_2} \int_\Omega 
(\delta + |\nabla_0u|^2)^{\frac{\beta+2}{2}}|\partial_t \eta| \eta 
\le ||\eta\partial_t\eta||_{L^\infty} |spt(\eta)|^{\frac{p-2}{p+\beta}}\Big(\int\int_{spt(\eta)}  (\delta+ |\nabla_0u|^2)^{\frac{p+\beta}{2}} \Big)^{\frac{\beta+2}{p+\beta}},
\]
which shows that it is bounded from above by the second item on the right hand side of \eqref{maincacci}.

For the third integral on the right hand side of \eqref{NH1}, we use H\"older's inequality and our main lemma, Lemma \ref{mainlemma},  and we have
\begin{equation*}
\begin{aligned}
\int_{t_1} ^{t_2} \int_\Omega
(\delta + |\nabla_0u|^2)^{\frac{p-2+\beta}{2}} |Zu|^2 \eta^2 &\le \Big(\int\int_{spt(\eta)}  (\delta+ |\nabla_0u|^2)^{\frac{p+\beta}{2}}\Big)^{\frac{p-2+\beta}{p+\beta}}\Big(\int_{t_1} ^{t_2}\int_\Omega 
|Zu|^{p+\beta} \eta^{p+\beta} \Big)^{\frac{2}{p+\beta}}\\
&\le C(p+\beta)^2 ||\nabla_0\eta||_{L^\infty}^2\int\int_{spt(\eta)} (\delta+ |\nabla_0u|^2)^{\frac{p+\beta}{2}}\\
&\ + 
C(p+\beta)^2||\eta\partial_t\eta||_{L^\infty}|spt(\eta)|^{\frac{p-2}{p+\beta}}\Big(\int\int_{spt(\eta)}  (\delta+ |\nabla_0u|^2)^{\frac{p+\beta}{2}} \Big)^{\frac{\beta+2}{p+\beta}},
\end{aligned}
\end{equation*}
which concludes the proof of the lemma.
\end{proof}

\section{Boundedness of the horizontal gradient}\label{lipschitz estimate}
In this section we conclude the proof of Theorem \ref{main1}, i.e. we establish that weak solutions of the approximating equation \eqref{maineq} with $\delta>0$ are Lipschitz continuous with respect to the subRiemannian distance, uniformly in the parameter $\delta$.  The proof follows immediately from Proposition \ref{cor1} and from the Moser type iteration in Theorem \ref{main2} below. The proof of Theorem \ref{main2} should be known, but we can not find the precise reference in the literature. It is similar to the proof of Theorem 4 in \cite{Ch} for the case $1<p<2$. The proof is included for the reader's convenience.

First, we recall a few definitions needed in the proof. We will denote by  $d_0(x,y)=||y^{-1}x||$ the subRiemannian distance, where 
$$||x||^4=(\sum_{i=1}^{2n} x_i^2)^2+ 16 x_{2n+1}^2,$$
is the Koranyi gauge.
The corresponding parabolic metric is $d_0 ((x,t), (y,s))= d_0(x,y)+|t-s|^2$. 

\begin{dfn}\label{cylinder} A parabolic cylinder $Q_{r}(x_0, t_0)\subset Q$
is a set of the form
$Q_{r}(x_0, t_0)= B(x_0, r) \times (t_0-r^2, t_0 ).$
where $r>0$, $B(x_0, r)=\{y|\ ||yx_0^{-1}||<r\} \subset \Omega$ denotes the gauge ball of center $x_0$. 
The parabolic boundary of the cylinder $Q_{ r}(x_0, t_0)\subset Q$ is
the  set $B(x_0, r) \times \{t_0-r^2\} \cup \partial B(x_0, r) \times [t_0-r^2, t_0).$
For $r$, $\mu>0$ we also define the cylinders 
$$ Q_{\mu, r} : = B(x, R) \times [t_0 - \mu R, t_0] $$
\end{dfn}

\begin{thrm}\label{main2}  
Let $u\in C^\infty(Q)$, with $Q=\Om\times (0,T)$. If $u$ satisfies the Caccioppoli type inequality \eqref{maincacci}, then  for every $p\ge 2$, and  for any $Q_{\mu, 2r}\subset Q$, we have 
\begin{equation}\label{desired}
\sup_{Q_{\mu, r}}  |\nabla_0u|
\le C \mu^{\frac 1 2} \max\Big( \Big(\frac{1}{\mu r^{N+2}}\int\int_{Q_{\mu,2r}} (\delta+|\nabla_0u|^2)^{\frac{p}{2}}\Big)^{\frac{1}{2}}, \mu^{\frac{p}{2(2-p)}}\Big),
\end{equation}
where $C=C(n,p,\lambda, \Lambda)>0$. 
\end{thrm}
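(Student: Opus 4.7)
The plan is to run a parabolic Moser iteration driven by the Caccioppoli estimate \eqref{maincacci}, adapted to the intrinsic parabolic cylinders $Q_{\mu,r}$ of the Heisenberg geometry. Set $G := \delta + |\nabla_0 u|^2$. The crucial computation linking \eqref{maincacci} to a Sobolev argument is
$$|\nabla_0 G^{(p+\beta)/4}|^2 \;\leq\; \tfrac{1}{4}(p+\beta)^2\, G^{(p-2+\beta)/2}\,|\nabla_0^2 u|^2,$$
so the second term on the left of \eqref{maincacci} dominates, up to a $(p+\beta)^{-2}$ factor and lower-order $|\nabla_0 \eta|$ terms, the Dirichlet energy $\int\!\!\int |\nabla_0 (G^{(p+\beta)/4} \eta)|^2$, while the first term yields the complementary $\sup_t \int G^{(\beta+2)/2}\eta^2$.

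I would combine this with the sub-Riemannian Sobolev embedding $\|f\|_{L^{2N/(N-2)}(B)} \leq C \|\nabla_0 f\|_{L^2(B)}$ on gauge balls in $\Hn$ (where $N = 2n+2$ is the homogeneous dimension) and the standard parabolic interpolation
\begin{equation*}
\int\!\!\int_Q |f|^{2\chi}\,dx\,dt \;\leq\; C\,\Big(\sup_{t} \int_\Omega f^2\,dx\Big)^{\!2/N}\int\!\!\int_Q |\nabla_0 f|^2\,dx\,dt, \qquad \chi := 1 + \tfrac{2}{N},
\end{equation*}
applied to $f = G^{(p+\beta_k)/4}\eta_k$ on a decreasing family of cylinders $Q_k := Q_{\mu, r_k}$ with $r_k := r(1 + 2^{-k})$, using cutoffs $\eta_k$ of the standard scale $\|\nabla_0 \eta_k\|_\infty \lesssim 2^k/r$, $\|Z\eta_k\|_\infty \lesssim 4^k/r^2$, $\|\eta_k\partial_t\eta_k\|_\infty \lesssim 2^k/(\mu r)$, together with the geometric sequence of exponents $p + \beta_k := p \chi^k$. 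Inserting \eqref{maincacci} into the interpolation yields a reverse-H\"older step of the form $\Phi_{k+1} \leq C_k \Phi_k + T_k$, with $\Phi_k := \bigl(\int\!\!\int_{Q_k} G^{(p+\beta_k)/2}\bigr)^{1/(p+\beta_k)}$, constants $C_k$ growing polynomially in $k$, and $T_k$ a tail coming from the second piece on the right of \eqref{maincacci}. Since $\sum_k \log C_k/(p+\beta_k)$ converges geometrically, letting $k\to\infty$ yields $\sup_{Q_{\mu,r}} G^{1/2} = \lim_k \Phi_k$ controlled by $\Phi_0$ plus the accumulated tail, which is the target estimate.

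The principal obstacle is twofold. First, for $p>2$ the sup-in-time part of \eqref{maincacci} controls $\sup_t\!\int G^{(\beta+2)/2}\eta^2$ rather than $\sup_t\!\int G^{(p+\beta)/2}\eta^2 = \sup_t\!\int f^2$, producing a mismatch in the parabolic interpolation that is absorbed by the intrinsic time-scale of $Q_{\mu,r}$; tracking the resulting $\mu$-power through all iterations accounts for the overall $\mu^{1/2}$ prefactor in \eqref{desired}. Second, the tail is $C(p+\beta)^7\, \|\eta\partial_t\eta\|_\infty\, |\operatorname{spt}\eta|^{(p-2)/(p+\beta)}\bigl(\int\!\!\int G^{(p+\beta)/2}\bigr)^{(\beta+2)/(p+\beta)}$, a sublinear perturbation of the principal integral. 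Splitting it via Young's inequality with conjugate exponents $(p+\beta)/(\beta+2)$ and $(p+\beta)/(p-2)$ absorbs a small fraction into the principal term, leaving an additive remainder of order $\|\eta\partial_t\eta\|_\infty^{(p+\beta)/(\beta+2)}|\operatorname{spt}\eta|$; with $\|\eta_k\partial_t\eta_k\|_\infty \sim 2^k/(\mu r)$, $|\operatorname{spt}\eta_k| \sim \mu r^{N+1}$, and $(p+\beta_k)/(\beta_k+2) \to 1$, propagating this remainder through the iteration produces precisely the additive scale $\mu^{p/(2(2-p))}$ appearing inside the maximum on the right of \eqref{desired}. The iteration therefore delivers the maximum of the $L^p$-averaged scale and the purely geometric $\mu$-scale, which is the content of the theorem.
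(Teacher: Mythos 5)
Your overall strategy is the paper's: a Moser iteration on shrinking intrinsic cylinders $Q_{\mu,r_i}$, driven by \eqref{maincacci} together with a parabolic Sobolev interpolation for $v=(\delta+|\nabla_0u|^2)^{(p+\beta)/4}\eta^2$. However, there is a genuine gap at the central step, and it is exactly the point you flag as ``the principal obstacle'' without actually resolving it. The interpolation you invoke,
$\iint |f|^{2\chi}\le C\big(\sup_t\int f^2\big)^{2/N}\iint|\nabla_0 f|^2$ with $f=G^{(p+\beta)/4}\eta$,
requires $\sup_t\int G^{(p+\beta)/2}\eta^2$, whereas \eqref{maincacci} only controls $\sup_t\int G^{(\beta+2)/2}\eta^2$. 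For $p>2$ this is a strictly lower power of the unknown $G$, and no choice of time-scale or power of $\mu$ can repair it: the defect is in the exponent of $G$, not in the geometry of the cylinder. The correct device (used in the paper) is the H\"older-in-space variant
\begin{equation*}
\iint v^{q}\;\le\;C\Big(\sup_{t}\int_\Omega v^{m}\Big)^{2/N}\iint|\nabla v|^{2},
\qquad m=\frac{2(\beta+2)}{p+\beta}\le 2,\quad q=2+\frac{2m}{N},
\end{equation*}
obtained by writing $v^q=v^{2m/N}v^2$ and applying H\"older with exponents $N/2$, $N/(N-2)$ before the Sobolev inequality. This gives a strictly smaller exponent gain per step than your $\chi=1+2/N$.

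As a consequence your geometric exponent sequence $p+\beta_k=p\chi^k$ is not attainable: the recursion forced by the corrected interpolation is $\alpha_{i+1}=\alpha_i+2(\beta_i+2)/N$, i.e.\ $\alpha_i=p-2+2\kappa^i$ with $\kappa=(N+2)/N$. This is not merely cosmetic. Since $\alpha_i\sim2\kappa^i$, the limit of $\alpha_0\kappa^{i}/\alpha_{i}$ is $p/2$, which is precisely what produces the exponent $\tfrac12$ on the integral average (and the powers $\mu^{1/2}$ and $\mu^{p/(2(2-p))}$) in \eqref{desired}; with your sequence the limiting exponent would be $1$ on $\overline M_0$, i.e.\ a power $1/p$ on the integral, which is not the stated estimate --- a sign the bookkeeping does not close. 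Finally, the tail term is handled more cleanly than by Young splitting: setting $M_i=\big(\frac{1}{\mu r^{N+2}}\iint_{Q_i}G^{\alpha_i/2}\big)^{1/\alpha_i}$ and $\overline M_i=\max\big(M_i,\mu^{1/(2-p)}\big)$ converts the two-term right-hand side of \eqref{maincacci} into the purely multiplicative recursion $\overline M_{i+1}^{\alpha_{i+1}/\kappa}\le C\mu^{2/(N+2)}2^{2i}\alpha_i^{7}\overline M_i^{\alpha_i}$, which iterates directly to \eqref{desired} without tracking additive remainders.
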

\begin{rmrk} Suspending temporarily the notation established in Remark \ref{notation}, we denote by $u_\delta$ the solutions of the approximating equation \eqref{maineq}.  As mentioned earlier, there is a subsequence $u_\delta\to u$ converging uniformly in compact subsets of $Q$ to the  weak solution $u$ of \eqref{maineq-zero}. In view of the uniform  bound on the Lipschitz constant of $u_\delta$ in \eqref{desired}, then the Lipschitz regularity of $u$ follows immediately.
\end{rmrk}
\begin{proof}
Let  $\eta\in C^{1}([0,T], C^\infty_0 (\Om))$ be a non-negative cut-off function vanishing on the parabolic boundary of $Q$ such that
$|\eta|\le 1$ in $Q$. For $\beta\ge 0$, we set
\[
v=(\delta+|\nabla_0u|^2)^{\frac{p+\beta}{4}}\eta^2.
\]
Then the Caccioppoli inequality \eqref{maincacci} gives us
\begin{equation}\label{maincacci2}
\begin{aligned}
\sup_{t_1<t<t_2}\int_\Omega v^m +\int_{t_1} ^{t_2} \int_\Omega 
|\nabla v|^2
\leq &  C(p+\beta)^7\big( ||\nabla_0\eta||_{L^\infty}^2+||\eta Z\eta||_{L^\infty}\big)\int\int_{spt(\eta)} v^2\\
&+
C(p+\beta)^7||\eta\partial_t\eta||_{L^\infty}|spt(\eta)|^{\frac{p-2}{p+\beta}}\Big(\int\int_{spt(\eta)} v^2\Big)^{\frac{\beta+2}{p+\beta}},
\end{aligned}
\end{equation}
where $C=C(n,p,\lambda,\Lambda)>0$. Here $m=2(\beta+2)/(p+\beta)$. Note that $4/p<m \le 2$. Now let $q=2(m+N)/N$. We have
\[
\int_{t_1} ^{t_2} \int_\Omega v^q\le  \int_{t_1}^{t_2}\Big(
\int_\Omega v^m\Big)^{\frac 2 N}\Big(\int_\Omega v^{\frac{2N}{N-2}}\Big)^{\frac{N-2}{N}}\le C \Big(\sup_{t_1<t<t_2}\int_\Omega v^m\Big)^{\frac 2 N} \Big(\int_{t_1} ^{t_2} \int_\Omega 
|\nabla v|^2\Big),
\]
where $C=C(n)>0$. Here in the second inequality, we used the Sobolev inequality in the space variables. Now we plug the estimate \eqref{maincacci2} into the above inequality and we obtain that
\begin{equation}\label{MM1}
\begin{aligned}
\Big(\int_{t_1} ^{t_2} \int_\Omega v^q\Big)^{\frac{N}{N+2}}\le &  C(p+\beta)^7\big( ||\nabla_0\eta||_{L^\infty}^2+||\eta Z\eta||_{L^\infty}\big)\int\int_{spt(\eta)} v^2\\
&+
C(p+\beta)^7||\eta\partial_t\eta||_{L^\infty}|spt(\eta)|^{\frac{p-2}{p+\beta}}\Big(\int\int_{spt(\eta)} v^2\Big)^{\frac{\beta+2}{p+\beta}},
\end{aligned}
\end{equation}
where $C=C(n,p,\lambda,\Lambda)>0$.  Here $q=2+4(\beta+2)/(N(p+\beta))$. This is the inequality on which our iteration is based.

Let $Q_{\mu, 2r}\subset Q$. We define, for $i=0,1,2,...$, a sequence of radius
$r_i=(1+2^{-i})r$ and a  sequence of exponent $\beta_i$ such  that $\beta_0=0$ and
\[ 
p+\beta_{i+1}=(p+\beta_i) \big(1+\frac{2(\beta_i+2)}{N(p+\beta_i)}\big),
\]
that is,
\[
\beta_i=2(\kappa^i-1), \quad \kappa=\frac{N+2}{N}.
\]
We denote $Q_i=Q_{\mu, r_i}$. Note that $Q_0=Q_{\mu, 2r}$ and $Q_\infty=Q_{\mu, r}$. The we choose a standard parabolic cut-off function $\eta_i\in C^\infty(Q_i)$ such that $\eta_i=1$ in $Q_{i+1}$ with 
\[ 
|\nabla_0 \eta_i|\le \frac{2^{i+8}}{r}, \quad |Z\eta_i|\le  \frac{2^{2i+8}}{r^2}, \quad |\partial_t \eta_i|\le \frac{2^{2i+8}}{\mu r^2} \quad \text{in }Q_i.
\]
Now we let $\eta=\eta_i$ and $\beta=\beta_i$ in \eqref{MM1} and we obtain that for $i=0,1,...$
\begin{equation}\label{MM2}
\begin{aligned}
\Big(\int\int_{Q_{i+1}} (\delta+|\nabla_0u|^2)^{\frac{\alpha_{i+1}}{2}}\Big)^{\frac{N}{N+2}}\le & C2^{2i}\alpha_i^7r^{-2}
\Big[ \Big(\int\int_{Q_i}(\delta+|\nabla_0u|^2)^{\frac{\alpha_{i}}{2}}\Big)^{\frac{p-2}{\alpha_i}}+\\
&+
\mu^{-1}\big(\mu r^{N+2}\big)^{\frac{p-2}{\alpha_i}}
\Big] \Big( \int\int_{Q_i}  (\delta+|\nabla_0u|^2)^{\frac{\alpha_{i}}{2}}\Big)^{\frac{\alpha_i-p+2}{\alpha_i}},
\end{aligned}
\end{equation}
where $C=C(n,p,\lambda,\Lambda)>0$ and $\alpha_i=p+\beta_i=p-2+2\kappa^i$. We denote
\[
M_i=\Big(\frac{1}{\mu r^{N+2}}\int\int_{Q_i} (\delta+|\nabla_0u|^2)^{\frac{\alpha_{i}}{2}}\Big)^{\frac{1}{\alpha_i}}.
\]
Then  we can write \eqref{MM2} as
\[
M_{i+1}^{\frac{\alpha_{i+1}}{\kappa}}\le C \mu^{\frac{2}{N+2}}2^{2i}\alpha_i^7\big(M_i^{p-2}+\mu^{-1}\big)M_i^{\alpha_i-p+2}.
\]
We set 
\[
{\overline M}_i=\max\big( M_i, \mu^{\frac{1}{2-p}}\big).
\]
Then it follows from the above inequality that
\begin{equation}\label{iter}
{\overline M}_{i+1}^{\frac{\alpha_{i+1}}{\kappa}}\le C \mu^{\frac{2}{N+2}}2^{2i}\alpha_i^7\overline{M}_i^{\alpha_i},
\end{equation}
since we may assume that $C=C(n,p, \lambda,\Lambda)\ge 1$. 
Iterating (\ref{iter}), we obtain that
\[
{\overline M}_{i+1}\le \Big(\prod_{j=0}^i K_j^{\frac{\kappa^{i+1-j}}{\alpha_{i+1}}}\Big){\overline M}_0^{\frac{\alpha_0\kappa^{i+1}}{\alpha_{i+1}}},
\]
where 
\[
K_i=C \mu^{\frac{2}{N+2}}2^{2i}\alpha_i^7.
\]
Recall that $\alpha_i=p-2+2\kappa^i$ and $\kappa= (N+2)/N$. Let $i$ go to infinity. We obtain that
\begin{equation}\label{MM3}
{\overline M}_\infty=\limsup_{i\to \infty} {\overline M}_i\le C \mu^{\frac{1}{2}}{\overline M}_0^{\frac p 2},
\end{equation}
where $C=(n,p,\lambda,\Lambda)>0$. Note that
\[
{\overline M}_\infty \ge \sup_{Q_{\mu,r}}|\nabla_0 u|, \quad {\overline M}_0=\max \Big( \Big(\frac{1}{\mu r^{N+2}}\int\int_{Q_{\mu,2r}} (\delta+|\nabla_0u|^2)^{\frac{p}{2}}\Big)^{\frac{1}{p}}, \mu^{\frac{1}{2-p}}\Big).
\]
Thus \eqref{MM3} gives us the desired inequality \eqref{desired}, completing the proof.
\end{proof}

\section{Higher integrability of $\partial_t u$}\label{time-derivative}

In this section, we prove that the time derivative $\partial_tu$ of weak solutions of \eqref{maineq} in the range $2\le p\le 4$ belongs to 
$L^q_{\loc}(\Omega\times (0,T))$ for every $q\ge 1$.  As observed before, once we  establish uniform estimates for $\p_t u_\delta$,  then arguing as in Remark \ref{limit-remark},  one can readily  conclude the integrability of $\p_t u$.

\begin{lemma} Let $u$ be a solution of equation \eqref{maineq} in $Q=\Omega\times (0,T)$. Then we have
\[
\partial_t u\in L^q_{\loc}(\Omega\times (0,T))
\]
for every $q\ge 1$. Moreover, for every $\beta\ge 0$, and non-negative $\eta\in C^1([0,T], C^\infty_0(\Omega))$ vanishing on the 
parabolic boundary, we have
\begin{equation}\label{tderivative}
\int_{t_1}^{t_2}\int_\Omega|\partial_tu|^{\beta+2}\eta^{\beta+2}\le  C\big(M^{2p-2}||\nabla_0\eta||_{L^\infty}^2+M^p || \eta\partial_t\eta||_{L^\infty}\big)^{\frac{\beta+2}{2}}\vert spt(\eta)\vert,
\end{equation}
where $C=C(p,\lambda, \Lambda, \beta)>0$ and $M=\sup_{spt(\eta)}(\delta+|\nabla_0u|^2)^{\frac{1}{2}}$.
\end{lemma}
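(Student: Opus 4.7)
The plan is to mirror the approach used for the center-direction derivative $Zu$ in Section \ref{new energy estimates}: first derive a Caccioppoli-type inequality for $v := \partial_t u$ by differentiating the PDE in time, then combine it with a direct test of the original equation, and finally close the estimate via weighted Young's inequality.

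\emph{Step 1 (Caccioppoli for $v$).} Differentiating \eqref{maineq} in $t$ yields the linear parabolic equation
$$\partial_t v = \sum_{i,j=1}^{2n} X_i\bigl(A_{i,\xi_j}(\nabla_0 u)\,X_j v\bigr),$$
whose coefficient matrix satisfies $\lambda(\delta+|\nabla_0 u|^2)^{(p-2)/2}|\zeta|^2 \le A_{i,\xi_j}\zeta_i\zeta_j \le \Lambda M^{p-2}|\zeta|^2$ on $\mathrm{spt}(\eta)$. Testing with $|v|^\beta v\,\tilde\eta^{\beta+2}$ for an auxiliary cutoff $\tilde\eta \ge \eta$ with $\tilde\eta = 1$ on $\mathrm{spt}(\eta)$, integrating by parts in $t$ on the left and in space on the right, and absorbing the cross-term via Young's inequality yields
\begin{equation*}
\int (\delta+|\nabla_0 u|^2)^{(p-2)/2}|v|^\beta|\nabla_0 v|^2\eta^{\beta+2} \le C(\beta)\bigl[\|\tilde\eta\partial_t\tilde\eta\|_\infty + M^{p-2}\|\nabla_0\tilde\eta\|_\infty^2\bigr]\!\int_{\mathrm{spt}(\tilde\eta)}|v|^{\beta+2}.
\end{equation*}

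\emph{Step 2 (Direct test of the PDE).} Testing \eqref{maineq} against $|v|^\beta v\,\eta^{\beta+2}$ and integrating by parts in space gives
$$G := \int|v|^{\beta+2}\eta^{\beta+2} = -(\beta+1)\int A_i(\nabla_0 u)|v|^\beta X_i v\,\eta^{\beta+2} - (\beta+2)\int A_i(\nabla_0 u)|v|^\beta v\,\eta^{\beta+1}X_i\eta.$$
Using weighted Young's inequality with weight $(\delta+|\nabla_0 u|^2)^{(p-2)/4}$,
$$|A_i||X_i v| \le \varepsilon (\delta+|\nabla_0 u|^2)^{(p-2)/2}|X_i v|^2 + C\Lambda^2\varepsilon^{-1}(\delta+|\nabla_0 u|^2)^{p/2},$$
the $\varepsilon$-piece is absorbed through Step 1, while the remainder contributes $CM^p \int|v|^\beta\eta^{\beta+2} \le CM^p\, G^{\beta/(\beta+2)}|\mathrm{spt}(\eta)|^{2/(\beta+2)}$ by H\"older. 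The second boundary-type integral is bounded by $C\Lambda M^{p-1}\|\nabla_0\eta\|_\infty\, G^{(\beta+1)/(\beta+2)}|\mathrm{spt}(\eta)|^{1/(\beta+2)}$.

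\emph{Step 3 (Closing the estimate).} Set $\Psi := M^{2(p-1)}\|\nabla_0\eta\|_\infty^2 + M^p\|\eta\partial_t\eta\|_\infty$. Applying Young's inequality with exponents $(\beta+2)/(\beta+1),\ \beta+2$ (second integral) and $(\beta+2)/\beta,\ (\beta+2)/2$ (first integral) absorbs all $G$-dependent factors into $\tfrac12 G$, yielding $G \le C(\beta,\lambda,\Lambda)\,\Psi^{(\beta+2)/2}|\mathrm{spt}(\eta)|$, which is exactly \eqref{tderivative}. Since $M<\infty$ on compact subsets by Theorem \ref{main2} and the resulting bound is uniform in $\delta > 0$, passing to the limit as in Remark \ref{limit-remark} gives $\partial_t u \in L^q_{\loc}(\Omega\times(0,T))$ for every $q\geq 1$.

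\emph{Main obstacle.} The central difficulty is the cutoff-power mismatch between Steps 1 and 2: the natural Caccioppoli for $v$ places $\int|v|^{\beta+2}\eta^\beta$ (not $G$) on its right-hand side. Introducing the auxiliary cutoff $\tilde\eta$ shifts the bound to $\int_{\mathrm{spt}(\tilde\eta)}|v|^{\beta+2}$ over a slightly enlarged support. Closing the argument without sacrificing uniformity in $\delta$ requires a Moser-style iteration over a nested family of cutoffs $\eta_k$ with shrinking annuli, with a careful bookkeeping of the $\beta$-dependence of the geometric constants so that $C(\beta,\lambda,\Lambda)$ remains finite for each fixed $\beta$.
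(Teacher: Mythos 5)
Your overall architecture is the same as the paper's: test the equation directly with $|\partial_t u|^{\beta}\partial_t u$ times a power of the cutoff, and control the resulting gradient term by a Caccioppoli inequality obtained from the time-differentiated equation $\partial_t(\partial_t u)=X_i(\partial_{\xi_j}A_{i,\delta}(x,\nabla_0 u)X_j\partial_t u)$. However, the ``main obstacle'' you flag at the end is a genuine gap, not a technicality, and the resolution you propose (an auxiliary cutoff $\tilde\eta$ plus a Moser-style iteration over nested cutoffs) is both unnecessary and insufficient: an iteration over concentric cylinders would at best give the qualitative conclusion $\partial_t u\in L^q_{\loc}$, but it cannot produce the stated inequality \eqref{tderivative} for an \emph{arbitrary} cutoff $\eta$ with constants depending only on $||\nabla_0\eta||_{L^\infty}$, $||\eta\partial_t\eta||_{L^\infty}$ and $|spt(\eta)|$. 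Moreover, in your Step 2 the $\varepsilon$-piece $\varepsilon\int(\delta+|\nabla_0u|^2)^{(p-2)/2}|v|^\beta|\nabla_0 v|^2\eta^{\beta+2}$ is absorbed via a bound whose right-hand side lives on $spt(\tilde\eta)\supsetneq spt(\eta)$ and carries no power of $\eta$, so it is not comparable to $G$ and cannot be absorbed into $\tfrac12 G$.

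The paper closes the loop with two small but essential adjustments. First, instead of a pointwise weighted Young inequality on $|A_i||X_i v|$, it applies H\"older with three exponents, splitting $\eta^{\beta+2}=\eta^{(\beta+4)/2}\cdot\eta^{\beta/2}$, so that the gradient term appears as
$I_2\le C|spt(\eta)|^{\frac{1}{\beta+2}}M^{\frac p2}L^{\frac{\beta}{2(\beta+2)}}J^{\frac12}$ with
$J=\int\int(\delta+|\nabla_0u|^2)^{\frac{p-2}{2}}|\partial_tu|^{\beta}|\nabla_0\partial_tu|^2\eta^{\beta+4}$,
i.e.\ the gradient integral is deliberately set up with the cutoff power $\beta+4$. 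Second, the Caccioppoli inequality for $J$ is obtained by testing the differentiated equation with $\varphi=|\partial_tu|^{\beta}\partial_tu\,\eta^{\beta+4}$ --- the \emph{same} $\eta$, raised to the power $\beta+4$ --- so that every term in which a derivative falls on the cutoff still carries at least $\eta^{\beta+2}$ and is therefore bounded by $(M^{p-2}||\nabla_0\eta||_{L^\infty}^2+||\eta\partial_t\eta||_{L^\infty})\,L$, with $L$ itself on the right. This eliminates the support mismatch entirely: one gets $L\le C(\dots)L^{\frac{\beta+1}{\beta+2}}$ and concludes by Young exactly as in your Step 3, with no second cutoff and no iteration. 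Your argument becomes correct once you replace the test function $|v|^\beta v\,\tilde\eta^{\beta+2}$ in Step 1 by $|v|^\beta v\,\eta^{\beta+4}$ and restructure Step 2 as a three-factor H\"older estimate rather than a pointwise Young inequality.
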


\begin{proof}
Let $\beta\ge 0$. Since $u$ is a solution of \eqref{maineq}, we can write
\[
|\partial_tu|^{\beta+2}=|\partial_tu|^\beta \partial_tuX_i(A_{i,\delta}(x,\nabla_0u)).
\]
We denote by $L$ the integral on the left hand side of \eqref{tderivative}, which is the object we will estimate. Let $\eta\in C^1([0,T], C^\infty_0(\Omega))$ 
be a non-negative cut-off function, vanishing on the 
parabolic boundary. Since $\eta(\cdot, t)\in C^\infty_0(\Omega)$ for every $t\in [0,T]$,  integration by parts gives us that
\begin{equation}\label{test1}
\begin{aligned}
L=&\int_{t_1}^{t_2}\int_\Omega|\partial_tu|^{\beta+2}\eta^{\beta+2}=\int_{t_1}^{t_2}\int_\Omega|\partial_tu|^{\beta}\partial_tuX_i(A_{i,\delta}(x,\nabla_0u))\eta^{\beta+2}\\
=&-(\beta+2)\int_{t_1}^{t_2}\int_\Omega|\partial_tu|^\beta\partial_tuA_{i,\delta}(x,\nabla_0u)\eta^{\beta+1}X_i\eta\\
&-(\beta+1)\int_{t_1}^{t_2}\int_\Omega|\partial_tu|^\beta X_i(\partial_tu)A_{i,\delta}(x,\nabla_0u)\eta^{\beta+2}=I_1+I_2.
\end{aligned}
\end{equation}
We will estimate the integrals $I_1,I_2$ in the right hand side of the above equality as follows. First, we use the structure condition and H\"older's inequality to estimate $I_1$. We have
\begin{equation}\label{test2}
\begin{aligned}
|I_1|\le & (\beta+2)\Lambda\int_{t_1}^{t_2}\int_\Omega (\delta+|\nabla_0u|^2)^{\frac{p-1}{2}}|\partial_tu|^{\beta+1}\eta^{\beta+1}|\nabla_0\eta|\\
\le & (\beta+2)\Lambda \Big( \int_{t_1}^{t_2}\int_\Omega |\partial_tu|^{\beta+2}\eta^{\beta+2}\Big)^{\frac{\beta+1}{\beta+2}}\times\\
& \times\Big(\int_{t_1}^{t_2}\int_\Omega
(\delta+|\nabla_0u|^2)^{\frac{p-1}{2}(\beta+2)}|\nabla_0\eta|^{\beta+2}\Big)^{\frac{1}{\beta+2}}\\
=&(\beta+2)\Lambda
||\nabla_0\eta||_{L^\infty} |spt(\eta)|^{\frac{1}{\beta+2}} M^{p-1} L^{\frac{\beta+1}{\beta+2}},
\end{aligned}
\end{equation}
where $M=\sup_{spt(\eta)}(\delta+|\nabla_0u|^2)^{\frac{1}{2}}$.

Second, we also use the structure condition and H\"older's inequality to estimate $I_2$.
We have
\begin{equation}\label{test3}
\begin{aligned}
|I_2|\le & (\beta+1)\Lambda\int_{t_1}^{t_2}\int_\Omega  (\delta+|\nabla_0u|^2)^{\frac{p-1}{2}}|\partial_tu|^{\beta}|\nabla_0\partial_tu|\eta^{\beta+2}\\
\le & (\beta+1)\Lambda  
\Big(\int_{t_1}^{t_2}\int_\Omega |\partial_tu|^{\beta+2}\eta^{\beta+2}\Big)^{\frac{\beta}{2(\beta+2)}}\Big(\int\int_{spt(\eta)}(\delta+|\nabla_0u|^2)^{\frac{p}{4}(\beta+2)}\Big)^{\frac{1}{\beta+2}} J^{\frac 1 2}\\
\le & (\beta+1)\Lambda  |spt(\eta)|^{\frac{1}{\beta+2}}
M^{\frac{p}{2}}L^{\frac{\beta}{2(\beta+2)}}J^{\frac 1 2},
\end{aligned}
\end{equation}
where 
\[
J=\int_{t_1}^{t_2}\int_\Omega (\delta+|\nabla_0u|^2)^{\frac{p-2}{2}}|\partial_tu|^\beta|\nabla_0\partial_tu|^2\eta^{\beta+4}.
\]
To estimate the integral $J$, we differentiate equation \eqref{maineq} with respect to $t$ and we obtain that
\[
\partial_t(\partial_tu)=X_i(\partial_{\xi_j}A_{i,\delta}(x,\nabla_0u)X_j(\partial_tu)).
\]
Then we use $\varphi=|\partial_tu|^\beta\partial_tu\eta^{\beta+4}$ as a test function to the above equation and we obtain the following
Caccioppoli inequality by the structure condition and the Cauchy-Schwarz inequality
\begin{equation}\label{test4}
\begin{aligned}
J\le &C \int_{t_1}^{t_2}\int_\Omega (\delta+|\nabla_0u|^2)^{\frac{p-2}{2}}|\partial_tu|^{\beta+2}\eta^{\beta+2}|\nabla_0\eta|^2
+C \int_{t_1}^{t_2}\int_\Omega|\partial_tu|^{\beta+2}\eta^{\beta+3}|\partial_t\eta|\\
\le & C (M^{p-2} ||\nabla_0\eta||_{L^\infty}^2+||\eta\partial_t\eta||_{L^\infty})L,
\end{aligned}
\end{equation}
where $C=C(p,\lambda,\Lambda,\beta)>0$. Here we used the fact that $\eta$ vanishes on the parabolic boundary. 
Combining \eqref{test3} and \eqref{test4}, we obtain the following estimate for $I_2$.
\begin{equation}\label{test5}
|I_2|\le C  |spt(\eta)|^{\frac{1}{\beta+2}}
M^{\frac{p}{2}}L^{\frac{\beta+1}{\beta+2}}(M^{p-2} ||\nabla_0\eta||_{L^\infty}^2+||\eta\partial_t\eta||_{L^\infty})^{\frac 1 2}.
\end{equation}
Now we combine \eqref{test1} with the estimates \eqref{test2} and \eqref{test5} and we end up with
\[
L\le C M^{p-1}||\nabla_0\eta||_{L^\infty} |spt(\eta)|^{\frac{1}{\beta+2}} L^{\frac{\beta+1}{\beta+2}}+C  |spt(\eta)|^{\frac{1}{\beta+2}}
M^{\frac{p}{2}}L^{\frac{\beta+1}{\beta+2}}(M^{p-2} ||\nabla_0\eta||_{L^\infty}^2+||\eta\partial_t\eta||_{L^\infty})^{\frac 1 2},
\]
from which \eqref{tderivative} follows. This completes the proof.
\end{proof}

\section{Concluding remarks and some open problems}

There are a number of immediate extensions which we want to highlight, as well as some more involved,  plausible extensions which we listen as open problems.

First of all, the prototype for the class of operators in \eqref{maineq-zero}  is the regularized $p$-Laplacian operator $$L_p u= \text{div}_{g_0,\mu_0}( (\ddelta+|\nabla_0 u|_{g_0}^2)^{\frac{p-2}{2}} \nabla_0 u)$$ 
in a sub-Riemannian contact manifold $(M,\omega,g_0)$, where $M$ is the underlying differentiable manifold, $\omega$ is the contact form and $g_0$ is a Riemannian metric on the contact distribution. The measure $\mu_0$ is the corresponding Popp measure.
Since the structure conditions \eqref{structure-zero} and equation \eqref{maineq-zero} are invariant by contact diffeomorphisms, then invoking Darboux coordinates one can pull-back the PDE from the setting of contact subRiemannian manifolds to that of the Heisenberg group. Consequently all our results extends to the more general contact subRiemannian setting.
For a more detailed description, see \cite[Section 6.1]{CCLDO}.  As an immediate corollary of Theorem \ref{main1} one has the following.

\begin{thrm}\label{main3}  Let $(M,\omega,g_0)$ be a contact, sub-Riemannian manifold and let $\Om\subset M$ be an open set. For $2\le p\le 4$,  $\delta\ge 0$,
consider $u\in L^p((0,T), W_0^{1,p}(\Om))$ be a weak solution of 
$$\p_t u =\operatorname{div}_{g_0,\mu_0}( (\ddelta+|\nabla_0 u|_{g_0}^2)^{\frac{p-2}{2}} \nabla_0 u) ,$$ in $Q=\Om\times (0,T)$.
 For any open ball $B\subset \subset \Om$ and $T>t_2\ge t_1\ge 0$,  and $q\ge 1$,
there exist  constants $C=C(n, p, d(B, \p\Om), T-t_2, \ddelta)>0$  and $C_q=C(n, p,q, d(B, \p\Om), T-t_2, \ddelta)>0$ such that
$$
||\nabla_0 u||_{L^\infty(B\times (t_1,t_2))} \le C \text{ and } ||\p_t u||_{L^q(B\times (t_1,t_2))} + ||Z u||_{L^q(B\times (t_1,t_2))} \le C_q.$$
\end{thrm}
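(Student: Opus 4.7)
The plan is to reduce the result on the contact sub-Riemannian manifold $(M,\omega,g_0)$ to the Heisenberg group setting via Darboux coordinates, and then invoke Theorem \ref{main1} together with the auxiliary estimates from Lemma \ref{mainlemma} and Section \ref{time-derivative}. First, I would verify that the operator $\operatorname{div}_{g_0,\mu_0}((\delta+|\nabla_0 u|_{g_0}^2)^{(p-2)/2}\nabla_0 u)$ fits the general framework of \eqref{maineq-zero}. In any local $g_0$--orthonormal horizontal frame $\{Y_1,\dots,Y_{2n}\}$, the Popp measure $\mu_0$ has a smooth positive density $a(x)$, and the divergence with respect to it reads $\operatorname{div}_{g_0,\mu_0}(V) = a^{-1}\sum_i Y_i(a V_i)$. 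Absorbing the zero-order term $a^{-1}Y_i a$ into $B_i(x,\xi) = (\delta+|\xi|^2)^{(p-2)/2}\xi_i + a^{-1}(Y_i a) |\xi|^{p-2}\xi_i/$ (etc.), one obtains a PDE of the form $\partial_t u = \sum_i Y_i B_i(x,\nabla_0 u)$ where $B_i$ satisfies the structure conditions \eqref{structure-zero} with constants depending only on $p$ and on bounds for $a$ and its horizontal derivatives on compact sets.

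Next, I would apply Darboux's theorem for contact manifolds (as used in \cite[Section 6.1]{CCLDO}): every $x_0\in M$ admits an open neighborhood $U \subset M$ and a contact diffeomorphism $\Phi\colon U \to \Phi(U) \subset \Hn$ mapping $\omega$ to the standard contact form up to a nonvanishing smooth scalar factor. Under $\Phi$, horizontal vector fields pull back to horizontal vector fields on $\Hn$, the Popp density transforms by a smooth positive factor, and the equation in Darboux coordinates again takes the form \eqref{maineq-zero} with coefficients $\tilde A_i$ satisfying \eqref{structure-zero} with constants that are uniformly bounded on any compact subset of $\Phi(U)$ — the bounds depending only on the $C^1$-norms of $\Phi$, $\Phi^{-1}$, and the conformal factor. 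In particular, the weak formulation \eqref{weak} is preserved by $\Phi$ (up to the smooth change of volume), so weak solutions on $M$ pull back to weak solutions on $\Hn$ and vice versa.

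Given a ball $B \subset \subset \Omega$ and times $0 \le t_1 \le t_2 < T$, I would cover the compact set $\overline{B}$ by finitely many Darboux charts $\{U_k,\Phi_k\}_{k=1}^N$ together with sub-domains $V_k \subset \subset \Phi_k(U_k)$ so that $\bigcup_k \Phi_k^{-1}(V_k)\supset \overline B$. On each chart, Theorem \ref{main1} applied to the transformed equation on parabolic cylinders $Q_{\mu,r}$ contained in $\Phi_k(U_k)\times(0,T)$ yields the Lipschitz bound
\begin{equation*}
\sup_{\Phi_k^{-1}(V_k)\times(t_1,t_2)}|\nabla_0 u| \le C_k
\end{equation*}
with $C_k$ depending on $n,p,\lambda',\Lambda',\mu,r,\delta$ and the local geometric data. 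Taking the maximum over $k$ yields the desired global bound on $\|\nabla_0 u\|_{L^\infty(B\times(t_1,t_2))}$, whose dependence reduces to $n,p,d(B,\partial\Omega),T-t_2,\delta$ (the distance $d(B,\partial\Omega)$ and time margin $T-t_2$ guarantee that sufficiently large parabolic cylinders are available). The uniform $L^q$ estimates for $Zu$ follow in the same way by pulling back Lemma \ref{mainlemma} (cf. Remark \ref{limit-remark}), and the $L^q$ estimates for $\partial_t u$ follow by pulling back the inequality \eqref{tderivative} together with the just-established $L^\infty$ bound on $|\nabla_0 u|$, which controls the quantity $M$ in that estimate.

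The main technical point, and really the only non-routine step, is verifying that the structure constants $\lambda',\Lambda'$ in \eqref{structure-zero} remain controlled after the contact diffeomorphism $\Phi$ — that is, that the pullback operator is still modeled on the $p$-Laplacian with degeneracy parameter $\delta$ and comparable ellipticity constants. This is entirely a bookkeeping computation involving the pullback of the Popp measure and the horizontal metric, and it is carried out in detail in \cite[Section 6.1]{CCLDO}. Once this is in place, Theorem \ref{main3} is an immediate consequence of Theorem \ref{main1}.
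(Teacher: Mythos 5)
Your overall strategy --- reduce to $\Hn$ via Darboux coordinates, check that the structure conditions \eqref{structure-zero} survive the pullback, cover $\overline B$ by finitely many charts, then quote Theorem \ref{main1} for the gradient bound, Lemma \ref{mainlemma} for $Zu$, and the estimate \eqref{tderivative} for $\partial_t u$ --- is exactly the route the paper takes; indeed the paper offers little more than the paragraph preceding the theorem, asserting invariance of the equation under contact diffeomorphisms and pointing to \cite[Section 6.1]{CCLDO} for the bookkeeping.

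There is, however, one step in your write-up that does not work as stated, and it is precisely the step the paper also glosses over. You write $\operatorname{div}_{g_0,\mu_0}(V)=a^{-1}\sum_i Y_i(aV_i)$ and then claim to ``absorb'' the resulting term $a^{-1}(Y_ia)V_i$ into new flux coefficients $B_i$. This is not possible: if $B_i=V_i+c_i(x)W_i$, then $\sum_iY_iB_i=\sum_iY_iV_i+\sum_ic_iY_iW_i+\sum_i(Y_ic_i)W_i$, which is not $\sum_iY_iV_i+\sum_ic_iV_i$. What the pullback actually produces, after writing $d\mu_0=a\,dx$ in the weak formulation, is the weighted equation $a(x)\,\partial_tu=\sum_jX_j\tilde A_j(x,\nabla_0u)$: the flux $\tilde A_j$ does satisfy \eqref{structure-zero} with constants controlled by local bounds on $a$, the metric coefficients, and their horizontal derivatives, but the time-derivative term carries the (generally non-constant) Popp density $a$. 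Equivalently, dividing by $a$ produces a genuine zero-order term $B(x,\nabla_0u)$ with $|B|\le C|\xi|^{p-1}$ on the right-hand side, which is exactly the class of equations the paper lists as open problem (1) in its concluding section; the cited \cite[Section 6.1]{CCLDO} treats the stationary case, where no weight on $\partial_tu$ arises. To make the corollary rigorous one must either arrange for $a$ to be constant in the chosen frame or rerun the Caccioppoli and Moser arguments with the weighted time term $a\,\partial_tu$ (harmless in principle, since $a$ is smooth and locally pinched between positive constants, but not literally covered by Theorem \ref{main1}). So your proposal reproduces the paper's argument, including its one genuine gap --- and your explicit ``absorption'' formula is the place where that gap surfaces once one tries to fill in the details.
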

Of course, if $\delta>0$ then in view of the results in \cite{CCG}, the solutions are smooth in $Q$.

Some of the following extensions seem   challenging, and we list them as open problems in increasing order of their perceived difficulty.

\begin{enumerate}
\item Standard, but technically involved, modifications should allow to extend our work to the case of equations of the type
$$\p_t u - X_i A_i (x,t, u, \nabla_0 u) = B(x,t,u,\nabla_0 u)$$
with structure conditions similar to those in \cite[Section 1, Chapter VIII]{DB}. 
\item We  feel it should possible to weaken the bounds in the structure conditions for $\p_{x_k} A_i$ and request instead  only horizontal derivatives bounds, bounds  on $X_k A_i$, although this would require some additional work in the proof of Lemma \ref{SobXU}. 
\item This paper only deals with scalar equations, however in the Euclidean case the results continue to hold also for systems of equations with additional structure (see \cite{DB}). The extension in the subelliptic setting would involve first extending the results of \cite{CCG}, and all the regularity theory literature that is used there.
\item Because our argument rests in a crucial way on Lemma \ref{SobXU}, the Lipschitz regularity for the range $4< p <\infty$ is currently beyond our reach. We conjecture that our main Caccioppoli inequality \eqref{maincacci} still holds with exactly the same statement in this extended range.
\item Proof of the  H\"older regularity of horizontal derivatives, in any range of $p\neq 2$.
\item Just as in the Euclidean case, the regularity problem in the range $1<p<2$ is more challenging, and would require completely different arguments. In the stationary case this has been solved by Mukherjee and Zhong in \cite{MZ}.
\item Our work extends easily to any step two Carnot group. Although there is promising work by Domokos and Manfredi \cite{DomMan} in the stationary case, the extension of our result to the higher step setting seems quite challenging.
\end{enumerate}

\end{document}